\newcommand{\aq}{\mathcal{A}_q}
\newcommand{\ap}{\mathcal{A}_p}
\newcommand{\apq}{\mathcal{A}_{p,q}}
\newcommand{\hpq}{H_{p,q}}
\newcommand{\hwa}{H(\wa)}
\newcommand{\uh}{U(\mathfrak{h})}
\newcommand{\aprs}{A_p(r,s)}
\newcommand{\apn}{A_p^n(r,s)}
\newcommand{\vpq}{\mathcal{V}_{p,q}}
\newcommand{\vq}{\mathcal{V}_q}
\newcommand{\sH}{\mathscr{H}}
\newcommand{\ox}{\bar x}
\newcommand{\oy}{\bar y}
\newcommand{\oz}{\bar z}
\newcommand{\mH}{\mathcal H}
\newcommand{\bI}{\mathbb I}
\title{Two-parameter analogs of the Heisenberg enveloping algebra}
\author{Jason Gaddis}
\address{Department of Mathematics, UCSD, La Jolla, CA 92093-0112, USA}
\email{jgaddis@ucsd.edu}
\date{}
\begin{document}

\begin{abstract}
One-parameter analogs of the Heisenberg enveloping algebra were studied 
previously by Kirkman and Small. 
In particular, they demonstrated how one may obtain Hayashi's 
analog of the Weyl algebra as a primitive factor of this algebra.
We consider various two-parameter versions of this problem.
Of particular interest is the case when the parameters are dependent.
Our study allows us to consider the representation theory of a
two-parameter version of the Virasoro enveloping algebra.
\end{abstract}

\maketitle

\section{Introduction}
\label{intro}

The Heisenberg Lie algebra $\mathfrak{h}$ is defined on generators 
$\{x,y,z\}$ subject to the relations 
$[x,y]=z$, $[x,z] = [y,z] = 0$.
The first Weyl algebra is a simple factor ring of 
the enveloping algebra $U(\mathfrak{h})$.
Determining the simple factor rings for enveloping algebras and 
quantum enveloping algebras is a long-standing problem in algebra 
and representation theory.

Kirkman and Small introduced a $q$-analog to $U(\mathfrak{h})$ 
where the commutator is replaced by the `quommutator' \cite{kirksmall}.
A key result of theirs is that there exist a degree two central element 
$\Omega$ such that $U_q(\mathfrak{h})/(\Omega-1)U_q(\mathfrak{h})$ 
is isomorphic to Hayashi's $q$-analog of the Weyl algebra
\cite{hayashi}, 
hence providing a simple factor ring in the quantum setting.

We generalize this situation further to the case of two parameters $p,q \in \kk^\times$ and study the algebra
	\[ \hpq = \kk\langle x,y,z \mid  zx-p\inv xz, zy-pyz, yx-q xy-z \rangle.\]
We explore various constructions of $\hpq$ in Section \ref{construct} and show 
that it is 3-dimensional Artin-Schelter regular (Proposition \ref{props1}) 
and a prime noetherian domain (Proposition \ref{prop.skpoly}).

We study the prime spectrum of $\hpq$,
with particular interest given to 
when (and how) $\hpq$ gives rise to a
simple factor ring analogous to the Weyl algebra.
There are three cases to consider corresponding to various relations between
the parameters $p$ and $q$.
We denote these cases in the following way,
\begin{itemize}
	\item[] Section \ref{sec.inv}, Inverse parameter case: 
		$p=q\inv$;
	\item[] Section \ref{sec.indep}, Independent parameter case: 
		$p^r \neq q^s$ for all $r,s \in \ZZ$;
	\item[] Section \ref{sec.dep}, Dependent parameter case: 
		$p^r=q^s$ for some $r,s \in \ZZ_+$ with $\gcd(r,s)=1$. 
\end{itemize}

The inverse parameter case was considered by Kirkman and Small.
We extend their results on prime ideals to $p,q$ roots of unity
(Proposition \ref{inv.primes}).

Benkart has studied multiparameter Weyl algebras in the independent
parameter case \cite{benkweyl}.
We demonstrate how to obtain Benkart's algebras from $\hpq$
through the use of noncommutative dehomogenization.
We then give an independent proof of the ring-theoretic properties of this algebra (Proposition \ref{indep.props}).

Of particular interest in this work is the \textit{dependent parameter case}.
In this case, assuming $p$ and $q$ are nonroots of unity,
$\hpq$ has a simple factor ring (Theorem \ref{simple}) which 
generalizes the Hayashi-Weyl algebra.
We give defining relations for these algebras
and their higher dimensional analogs, as well as determine a 
faithful, irreducible representation (Proposition \ref{vn.irred}).

Dean and Small \cite{deansmall} have given a method for finding 
representations of the Virasoro algebra from those of the Weyl algebra. 
Kirkman and Small extended this to the one-parameter case.
In Section \ref{sec.vira}, we study a two-parameter analog of the
Virasoro algebra in a similar manner.

\section{Background}
\label{bkgrd}

As explained in \cite{kirksmall}, 
the harmonic oscillator problem in quantum mechanics is to find operators 
$a$ and $a^+$ acting on a Hilbert space with orthonormal basis 
$\{v_n\}_{n \geq 0}$ such that $aa^+-a^+a = 1$ and 
$\hat{H}v_n = (n+1/2)\hbar\omega v_n$, 
where $\hat{H}=\hbar\omega (a^+a^-+1/2)$ is the Hamiltonian\footnote{$\hbar$ is 
Planck's constant and $\omega$ is angular momentum.}. 
The operators $a$ and $a^+$ are typically referred to as 
the annihilation and creation operators, respectively. 
From an algebraic viewpoint, 
the study of the harmonic oscillator problem is equivalent to studying 
the representation theory of the Weyl algebra, 
\[\fwa = \kk\langle x,y \mid yx-xy-1 \rangle.\]

Suppose our space is $\mathbb{R}^n$ and 
the $v_n$ are the standard basis vectors. 
We can represent $a$ and $a^+$ as matrices
\begin{align}\label{mrep}
  a = \begin{pmatrix}  0 & \sqrt{1} &     0       & 0 & 0 & \cdots \\
            0 & 0  &      \sqrt{2}   & 0 & 0 & \cdots \\
            0 & 0  &      0      & \sqrt{3} & 0 & \cdots \\
            \vdots & \vdots & \vdots    & \vdots  & \ddots & \vdots\end{pmatrix}, ~
  a^+ = \begin{pmatrix}0 & 0 &     0       & 0 & 0 & \cdots \\
            \sqrt{1} & 0  &      0   & 0 & 0 & \cdots \\
            0 & \sqrt{2}  &      0      & 0 & 0 & \cdots \\
            \vdots & \vdots & \ddots    & \vdots  & \vdots & \vdots\end{pmatrix}.
\end{align}
Then $a^+v_n = \sqrt{n+1}v_{n+1}$ for $n \geq 0$ and 
$av_0=0, av_n = \sqrt{n}v_{n-1}$ for $n \geq 1$.

Define the \textit{$q$-number} to be
\begin{align}
\label{qnum}
    [n]_q = \frac{q^n-q^{-n}}{q-q\inv} = \sum_{i=0}^{n-1} q^{2i}.
\end{align}
Let $N$ be the diagonal matrix with 
$n_{ii}=[i]_q$ for $i \geq 0$ (the number operator).
If we replace $\sqrt{n}$ with $\sqrt{[n]_q}$ in \eqref{mrep}, then
the matrices $a,a^+$ satisfy the relations
$aa^+-qa^+a = q^{-N}$, $Na^+-a^+N = a^+$, and $Na-aN = -a$. 
In terms of the standard basis vectors $v_n$,
the operators satisfy
\begin{align*}
  a^+v_n &= \sqrt{[n+1]_q}v_{n+1} \text{ for } n \geq 0, \\
  av_0 &= 0 \text{ and } 
  av_n = \sqrt{[n]_q}v_{n-1} \text{ for } n \geq 1.
\end{align*} 
In this way, these operators may be regarded as a 
$q$-analog of the harmonic oscillator. 
Making the replacement $L=q^{-N}$ so that 
$La^+ = q\inv a^+L$ and $La=qaL$, as in \cite{kirksmall},
gives the relations for the algebra $H_q$, 
a $q$-analog of $U(\mathfrak{h})$. 
The matrix representation above is indeed an irreducible 
representation of $H_q$, but it is not faithful. 
Let $\Omega = (aa^+ - (1-q^2)L)L$. 
Then $\Omega$ is a central element of $H_q$ and 
$H_q/(\Omega-1)H_q =: \aq$ is the Hayashi $q$-analog of the 
Weyl algebra \cite{hayashi}. 
Then \eqref{mrep} is a faithful, irreducible representation of $\aq$.

There is another analog of the Weyl algebra that is well-studied in 
ring theory, namely the \textit{quantum Weyl algebra}.
For $q \in \kk^\times$, this algebra has presentation 
\begin{align}
\label{qwa.def}
	\wa = \kk\langle x,y \mid yx-q xy-1 \rangle.
\end{align}
In higher dimensions, quantum Weyl algebras can be constructed 
as differential operators on quantum affine $n$-space. 
However, the ring theoretic properties of 
$\wa$ do not mimic those of $\fwa$ well. 
In particular, $\fwa$ is a simple noetherian domain of 
global dimension one and GK dimension two.
While $\wa$ is a noetherian domain with GK dimension two,
it is not simple and has global dimension two.
On the other hand, $\aq$ shares with $\fwa$ all of the 
above mentioned properties.

Analogously, Chakrabarti and Jagannathan \cite{chakjag} have considered 
two-parameter analogs of $\mathfrak{su}(2)$ to construct 
$(p,q)$-oscillators satisfying
\begin{align*}
  aa^+-qa^+a = p^{-N}, \;\;\;
  aa^+-p\inv a^+a = q^N, \;\;\; 
  Na^+-a^+N = a^+, \;\;\;
  Na-aN = -a.
\end{align*}
Define the \textit{$(p,q)$-number} to be
\begin{align}
\label{pqdef}
  [n]_{p,q} = \frac{q^n-p^{-n}}{q-p\inv} = \sum_{i=0}^{n-1} q^i p^{-(n-i)}.
\end{align}
When $p=q$, this reduces to the standard $q$-number.
Note that $[n]_{p,q}=0$ if $p$ and $q$ are both primitive roots of 
unity and both orders divide $n$. 
We obtain a matrix representation of these operators by making the 
replacement $\sqrt{[n]_{p,q}}$ for $\sqrt{n}$ in \eqref{mrep} and 
noting the identities
\begin{align}
\label{pqrel}
  [n+1]_{p,q} 
  	= p^{-n} + q[n]_{p,q} 
  	= q^n + p\inv[n]_{p,q}.
\end{align}

Suppose $q^r=p^s$ for some integers $r$ and $s$. 
The identification $L=p^{-N}$ gives $q^{rN}=L^{-s}$ 
and so we have the following relations
\begin{align}
\label{aqn-rel}
  aa^+-q a^+a= L, \;\;\;
  (aa^+-p\inv a^+a)^r = L^{-s}, \;\;\;
  La^+-p\inv a^+L = 0, \;\;\;
  La-paL = 0.
\end{align}
We call this algebra $\aprs$. 
Like $\aq$, it is simple and the corresponding representation 
is faithful and irreducible. 
Further study of this algebra is contained in Section \ref{sec.dep}.

\section{Constructions, identities, and basic properties}
\label{construct}

Throughout, $\kk$ is an uncountable, algebraically closed, 
characteristic zero field.

We begin by considering multiple constructions of $\hpq$. 
Throughout, we fix $p,q \in \kk^\times$.
Initially, we place no restrictions on roots of unity or 
relation between the parameters.
As a $\kk$-algebra, we have the following presentation
\begin{align}
\label{reln.hberg}
	\hpq = \kk\langle x,y,z \mid zx-p\inv xz, zy-pyz, yx-q xy-z \rangle.
\end{align}
Assigning degree one to $x$ and $y$ and degree two to $z$ gives 
$\hpq$ the form of a connected graded algebra.
In \cite{hayashi} and \cite{benkweyl},
the third defining relation is taken to be $yx-q^2xy=z^2$
but we opt for the above convention as in \cite{kirksmall}.

\begin{prop}
\label{props1}
The algebra $\hpq$ is (Artin-Schelter) regular of global and 
GK dimension three.
\end{prop}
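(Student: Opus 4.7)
The plan is to prove AS-regularity by first establishing a PBW basis, computing the Hilbert series, and then exhibiting an explicit graded free resolution of the trivial module whose length gives the global dimension and whose dualization verifies the AS-Gorenstein condition.

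For the PBW basis, orient the three defining relations by the order $z > y > x$, so the leading monomials are $zx$, $zy$, and $yx$. The only overlap ambiguity arises at $zyx$: reducing via $zy \to pyz$ first yields $pyzx \to yxz \to qxyz + z^2$, while reducing via $yx \to qxy + z$ first yields $qzxy + z^2 \to qp^{-1}xzy + z^2 \to qxyz + z^2$. These agree, so Bergman's diamond lemma gives the $\kk$-basis $\{x^i y^j z^k : i,j,k \ge 0\}$. Under the weighting $\deg x = \deg y = 1$, $\deg z = 2$ (in which the relations are homogeneous of degrees $3, 3, 2$), the Hilbert series is
\[
H(t) = \frac{1}{(1-t)^2(1-t^2)},
\]
so $\operatorname{GKdim} \hpq = 3$.

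I would then construct the graded free resolution of $\kk$
\[
0 \to \hpq(-4) \xrightarrow{d_3} \hpq(-3)^{\oplus 2} \oplus \hpq(-2) \xrightarrow{d_2} \hpq(-1)^{\oplus 2} \oplus \hpq(-2) \xrightarrow{d_1} \hpq \to \kk \to 0,
\]
where $d_1$ sends its three generators to $x, y, z$; the columns of $d_2$ are the left syzygies $s_1 = z e_x - p^{-1} x e_z$, $s_2 = z e_y - p y e_z$, $s_3 = y e_x - q x e_y - e_z$ coming from the three defining relations; and $d_3$ sends its degree-$4$ generator to $-p y \cdot f_1 + q p^{-1} x \cdot f_2 + z \cdot f_3$. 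The crucial calculation is the identity $z \cdot s_3 = p y \cdot s_1 - q p^{-1} x \cdot s_2$ in $\hpq^{\oplus 3}$, which collapses on the $e_z$-component to the third defining relation $yx - qxy - z = 0$; this gives $d_2 \circ d_3 = 0$. Injectivity of $d_3$ is immediate from the PBW property (right multiplication by $z$ sends $x^i y^j z^k$ to $x^i y^j z^{k+1}$, so is injective). The syzygies $s_1, s_2, s_3$ generate $\ker d_1$ by construction (they arise from a full minimal set of defining relations), and the Euler-characteristic identity $H(t)(1 - 2t + 2t^3 - t^4) = 1$ forces exactness at the remaining middle position.

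Finally, apply $\operatorname{Hom}_{\hpq}(-, \hpq)$ to the resolution. The dual complex has free ranks and degree shifts matching $P_{3-\bullet}(4)$, and a direct inspection of the dual differentials shows $\operatorname{Ext}^i_{\hpq}(\kk, \hpq) = 0$ for $i \neq 3$ and $\operatorname{Ext}^3_{\hpq}(\kk, \hpq) \cong \kk(4)$, giving AS-Gorenstein with parameter $\ell = 4$. Global dimension $3$ is then immediate from the length of the resolution. The main obstacle is guessing the syzygy $-p y f_1 + q p^{-1} x f_2 + z f_3$; once it is in hand, exactness via Euler characteristic and AS-Gorenstein via rank-matching in the dual reduce to mechanical bookkeeping.
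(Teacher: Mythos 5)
Your proof is correct, but it takes a genuinely different route from the paper. The paper's argument is three lines: $z$ is a regular normal element of $\hpq$ with $\hpq/z\hpq \iso \qp$, the quantum plane, which is AS-regular of global and GK dimension two; regularity then lifts along the regular normal element by Levasseur's theorem, with both dimensions increasing by one. You instead prove everything from scratch: the Diamond Lemma check at the single ambiguity $zyx$ is right, the Hilbert series and the identity $(1-t)^2(1-t^2) = 1-2t+2t^3-t^4$ check out, and your second syzygy is correct --- $z\cdot s_3 - \bigl(py\cdot s_1 - qp^{-1}x\cdot s_2\bigr)$ collapses to $(yx-qxy-z)e_z = 0$, so $d_2\circ d_3 = 0$ and the Euler-characteristic argument does force exactness at the middle spot, since the homology of the augmented complex is already known to vanish everywhere else. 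What your approach buys is an explicit minimal free resolution and the Gorenstein parameter $\ell=4$, which the paper's citation hides; what it costs is length, plus one step you are still glossing: for the dual complex, the Euler characteristic only yields $\dim H^1 = \dim H^2$ in each degree (the two middle cohomologies could cancel), so ``direct inspection of the dual differentials'' must genuinely be carried out, most cleanly by recognizing the dual complex as the minimal free resolution of $\kk$ as a \emph{right} $\hpq$-module and rerunning the same syzygy computation on that side. As a small point in the paper's favor, note that its proof as written calls $z$ ``central,'' whereas $z$ is only normal ($zx = p^{-1}xz$); Levasseur's result applies to regular normal elements, so the argument stands, but your computation sidesteps the issue entirely.
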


\begin{proof}
Since $z$ is a central regular element in $\hpq$, 
then $\hpq$ is regular if and only if 
$\hpq/z\hpq$ is regular \cite{leva}. 
But $\hpq/z\hpq$ is isomorphic to the quantum plane 
$\mathcal{O}_{q}(\kk^\times)$,
which is regular of global and GK dimension two \cite{artsch}.
\end{proof}

For an $\NN$-graded algebra $A = \oplus_{n \in \NN} A_n$ and 
$\tau$ a graded automorphism of $A$, 
a \textit{Zhang twist} \cite{zhang} of $A$ is the algebra with the same 
$\kk$-algebra basis as $A$ but a new multiplication $*$ given by 
$f*g = f\tau^n(g)$ for all $f \in A_n$, $g \in A_m$.
We denote this algebra $A^\tau$. 

For $H=\hpq$, we define $\tau \in \Aut(H)$ by 
$\tau(x)=\sqrt{p} x$, $\tau(y) = \sqrt{p\inv} y$ and $\tau(z)=z$. 
Then in $H^\tau$ we have,
\begin{align*}
  &z*x-x*z = z\tau^2(x) - x\tau(z) 
  		= pzx-xz = p(zx-p\inv xz) = 0, \\
  &z*y-y*z = z\tau^2(y) - y\tau(z) 
  		= p\inv zy - yz = p\inv (zy-pyz) = 0, \\
  &y*x - pq x*y - z 
  	= y\tau(x) - pq x\tau(y) - z = \sqrt{p}yx - \sqrt{p}q xy - z
    = \sqrt{p}(yx- q xy - \sqrt{p} z).
\end{align*}
Recall the definition for the quantum Weyl algebra \eqref{qwa.def}.
The homogenized quantum Weyl algebra is the $\kk$-algebra
with parameter $q \in \kk^\times$ and presentation
	\[ \hwa = \kk\langle x,y,z \mid zx-xz, zy-yz, yx-qxy-z\rangle.\]
It is clear that $H^\tau \iso H(A_1^{pq}(\kk))$ via the map 
$x \mapsto x$, $y \mapsto y$, and $z \mapsto \sqrt{p\inv} z$.
Hence, the category of graded modules on $\hpq$ is equivalent
to that of $H(A_1^{pq}(\kk))$ \cite[Theorem 1.1]{zhang}.

\begin{remark}
Suppose we replace the relation $zx-p\inv xz$ in \eqref{reln.hberg}
with $zx-p'xz$ for some $p' \in k^\times$.
That is, one might hope for a three-parameter analog of $\uh$.
However, a direct applicaton of Bergman's Diamond Lemma \cite{diamond} 
shows that $\GK \hpq =3$ if and only if $p'=p\inv$.
\end{remark}

Let $R$ be a ring. 
Given $\sigma \in \Aut(R)$, a $\kk$-linear map 
$\delta:R \rightarrow R$ is said 
to be a \textit{$\sigma$-derivation} if it satisfies 
$\delta(r_1r_2)=\sigma(r_1)\delta(r_2)+\delta(r_1)r_2$ 
for all $r_1,r_2 \in R$. 
The \textit{skew polynomial ring} $S=R[x;\sigma,\delta]$ is the 
overring of $R$ with commutation given by $xr = \sigma(r)x+\delta(r)$ 
for all $r \in R$. 
If $\delta=0$ or $\sigma = \mathrm{id}_R$, 
then we abbreviate $S$ as $R[x;\sigma]$ or $S=R[x;\delta]$, respectively.

We say $\sigma$ is an \textit{inner automorphism} if there exists 
a unit $a \in R$ such that $\sigma(r)=a\inv ra$ for all $r \in R$. 
In this case, $R[x;\sigma,\delta] = R[ax;a\delta]$. 
We say $\delta$ is an \textit{inner $\sigma$-derivation} if there 
exists $t \in R$ such that $\delta(r)=tr-\sigma(r)t$ for all $r \in R$. 
In this case, $R[x;\sigma,\delta]=R[x-t;\sigma]$.

\begin{prop}
\label{prop.skpoly}
The algebra $\hpq$ may be presented as an 
iterated skew polynomial ring,
and hence is a prime, noetherian domain.
\end{prop}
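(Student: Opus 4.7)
The plan is to construct $\hpq$ by two successive skew polynomial extensions starting from the commutative polynomial ring $\kk[x]$, and then invoke the fact that a skew extension $R[y;\sigma,\delta]$ of a noetherian domain by an automorphism $\sigma$ and a $\sigma$-derivation $\delta$ is itself a noetherian domain.

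First, I would form $R_1 = \kk[x][z;\sigma_1]$, where $\sigma_1$ is the $\kk$-algebra automorphism of $\kk[x]$ determined by $\sigma_1(x) = p^{-1}x$. This gives the relation $zx = p^{-1}xz$, and $R_1$ is the quantum plane $\mathcal{O}_{p^{-1}}(\kk^2)$, already a noetherian domain. Next I would extend to $R_2 = R_1[y;\sigma_2,\delta_2]$, where $\sigma_2$ is the $\kk$-algebra automorphism of $R_1$ defined on generators by $\sigma_2(x)=qx$ and $\sigma_2(z)=p^{-1}z$, and $\delta_2$ is the $\sigma_2$-derivation of $R_1$ determined on generators by $\delta_2(x)=z$ and $\delta_2(z)=0$. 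The intended relations in $R_2$ then read $yx = \sigma_2(x)y+\delta_2(x) = qxy+z$ and $yz = \sigma_2(z)y+\delta_2(z) = p^{-1}zy$, which matches the defining relations of $\hpq$ exactly.

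The one genuine thing to check, and what I expect to be the only real obstacle, is that $\sigma_2$ extends from the free algebra to a well-defined automorphism of $R_1$ and that $\delta_2$ likewise extends to an honest $\sigma_2$-derivation; both require testing compatibility with the single defining relation $zx - p^{-1}xz=0$ of $R_1$. For $\sigma_2$ one computes $\sigma_2(z)\sigma_2(x) = (p^{-1}z)(qx) = p^{-1}q\,zx = p^{-2}q\,xz = p^{-1}\sigma_2(x)\sigma_2(z)$, so the relation is preserved. For $\delta_2$, applying the twisted Leibniz rule to both sides of $zx = p^{-1}xz$ gives $\sigma_2(z)\delta_2(x)+\delta_2(z)x = p^{-1}z\cdot z + 0 = p^{-1}z^2$ on the one hand, and $p^{-1}\bigl(\sigma_2(x)\delta_2(z)+\delta_2(x)z\bigr) = p^{-1}(0+z\cdot z)=p^{-1}z^2$ on the other, so $\delta_2$ is well-defined. (Once $\sigma_2$ is shown to be invertible on generators, which is immediate since $q,p^{-1}\in\kk^\times$, it is automatically an automorphism.)

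With $R_2$ now a bona fide iterated Ore extension, the universal property of skew polynomial rings yields a surjection $\hpq\twoheadrightarrow R_2$ sending the generators to themselves, and by comparing the PBW-type $\kk$-basis $\{x^iz^jy^k\}$ of $R_2$ with the standard ordered monomial basis of $\hpq$ (obtained from the diamond lemma, whose ambiguities were already implicit in the earlier regularity statement), this surjection is an isomorphism. Finally, since the classical theory of Ore extensions (see e.g.\ McConnell--Robson) guarantees that iterated skew polynomial extensions of a noetherian domain by automorphisms and associated derivations remain noetherian domains, $\hpq$ is a noetherian domain, and in particular prime.
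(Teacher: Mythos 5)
Your proof is correct and is essentially the paper's own argument: the paper realizes $\hpq$ as $\kk[z][x;\alpha]$ (the quantum plane) followed by the extension $R[y;\sigma,\delta]$ with $\sigma(x)=qx$, $\sigma(z)=p^{-1}z$, $\delta(x)=z$, $\delta(z)=0$, which is exactly your second step; adjoining $z$ to $\kk[x]$ rather than $x$ to $\kk[z]$ in the first step is an immaterial reordering. Your explicit verification that $\sigma_2$ and $\delta_2$ respect the quantum-plane relation is a detail the paper omits but is the right thing to check.
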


\begin{proof}
Let $R=\kk[z][x;\alpha]$ where $\alpha$ is the $\kk[z]$-automorphism 
defined by $\alpha(z)=pz$,
so $R$ is a \textit{quantum plane}, $\qpp$. 
Then $\hpq \iso R[y;\sigma,\delta]$ where $\sigma$ is the 
$R$-automorphism given by $\sigma(x)=qx$, $\sigma(z)=p\inv z$ and 
$\delta$ is a $\sigma$-derivation on $R$ given by $\delta(x)=z, \delta(z)=0$.
\end{proof}

When $pq \neq 1$, 
$\hpq$ has the form of an ambiskew polynomial ring as defined in \cite{jordan}. 
The base ring in this case is $A=\kk[z]$. 
Let $u=(1-pq)\inv z \in A$ and let $\alpha$ be as in the 
skew polynomial construction. 
Then $xz=\alpha(z)x$, $yz=\alpha\inv(z)x$ and $yx-q xy=u-q\alpha(u)=z$. 
Certain aspects of this paper have been considered in \cite{jorwells} 
from the viewpoint of ambiskew polynomial rings.

Given a ring $D$, $\rho \in \Aut(D)$, and $a \in \cnt(D)$,
the \textit{Generalized Weyl Algebra} (GWA) $D(\rho,a)$ is defined to
be the ring generated by $D$ and indeterminates $x$ and $y$ satisfying
\begin{align}
\label{defn.gwa} 
	xd = \rho(d)x, \;\;
	yd = \rho\inv(d)y, \;\;
	xy = \rho(a), \;\;
	yx = a,
\end{align}
for all $d \in D$ \cite{bavjor}.
Let $D=\kk[yx,z]$ with $\rho \in \Aut(D)$ defined by
$\rho(z)=pz$ and $\rho(yx)=q\inv(yx-z)$.
An easy check shows $\hpq \iso D(\rho,yx)$.

Substituting $z=yx-qxy$ into the first two relations in \eqref{reln.hberg}
gives the algebra on generators $x$ and $y$ subject to the relations
\begin{align}
\label{du.def}
  yx^2 = (p\inv + q)xyx - p\inv q x^2y, ~~~ 
  y^2x = (p\inv + q)yxy - p\inv q xy^2.
\end{align}
Thus, $\hpq$ is a down-up algebra as defined in \cite{benkroby} with parameters 
$d=y$, $u=x$, $\alpha=q+p\inv$, $\beta = -p\inv q$, and $\gamma=0$. 
By \cite[Theorem 1.1]{kirkmus}, 
such an algebra has Hopf structure if and only if $\alpha + \beta = 1$. 
In our case, this reduces to the condition that $p=1$ or $q=1$.

Just as $U(\mathfrak{h})$ appears as a subalgebra of $U(\mathfrak{sl}_3)$, 
so too does $U_q(\mathfrak{h})$ appear as a subalgebra of $U_q(\mathfrak{sl}_3)$ \cite{kirksmall}. 
Benkart and Witherspoon \cite{benkspoon} have defined a two-parameter analog 
$U_{r,s}(\mathfrak{sl}_3)$ of $U(\mathfrak{sl}_3)$. 
The generators $\{e_1,e_2,e_3\}$ of $U_{r,s}(\mathfrak{sl}_3)$ 
satisfy the relations
\begin{align*}
 e_i^2 e_{i+1} - (r+s)e_i e_{i+1} e_i + rs  e_{i+1} e_i^2 &= 0, \\
 e_i e_{i+1}^2 - (r+s)e_{i+1} e_i e_{i+1} + rs  e_{i+1}^2 e_i &= 0.
\end{align*}
From \eqref{du.def}, it is clear that $\hpq \iso \kk\{e_i,e_j\}$, 
$i \neq j$, with $r=p\inv$, $s=q$.

\begin{lem}
\label{idlm1}
In $\hpq$ the following identities hold for $n > 0$,
\begin{align}
    \label{ident1} yx^n &= q^{n}x^ny + [n]_{p,q} x^{n-1} z, \\
    \label{ident2} y^nx &= q^{n}xy^n + [n]_{p,q} z y^{n-1}.
\end{align}\end{lem}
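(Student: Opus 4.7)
The plan is to prove both identities by induction on $n$, using the base relations
$yx = qxy+z$, $zx = p^{-1}xz$, and $zy = pyz$ (equivalently $yz = p^{-1}zy$), together with the recursion $[n+1]_{p,q} = q^n + p^{-1}[n]_{p,q}$ recorded in \eqref{pqrel}.

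For \eqref{ident1}, the base case $n=1$ is exactly the defining relation $yx = qxy+z$, noting $[1]_{p,q}=1$. For the inductive step, I would compute
\[
yx^{n+1} = (yx^n)x = \bigl(q^n x^n y + [n]_{p,q} x^{n-1} z\bigr) x,
\]
then push the $y$ past a single $x$ using $yx = qxy+z$ and the $z$ past $x$ using $zx = p^{-1}xz$. Collecting terms gives
\[
yx^{n+1} = q^{n+1} x^{n+1} y + \bigl(q^n + p^{-1}[n]_{p,q}\bigr) x^n z,
\]
and the coefficient of $x^n z$ is $[n+1]_{p,q}$ by \eqref{pqrel}.

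For \eqref{ident2}, the argument is symmetric: the base case is again $yx = qxy+z$, and for the inductive step one writes $y^{n+1}x = y(y^n x)$, applies the inductive hypothesis, moves a $y$ past $x$ via $yx = qxy+z$ and moves $y$ past $z$ via $yz = p^{-1}zy$. This produces
\[
y^{n+1} x = q^{n+1} x y^{n+1} + \bigl(q^n + p^{-1}[n]_{p,q}\bigr) z y^n,
\]
and again the coefficient is $[n+1]_{p,q}$.

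There is no real obstacle here: the whole lemma reduces to bookkeeping that matches the two-term recursion defining $[n]_{p,q}$. The only point that requires a moment of care is choosing the correct form of the recursion from \eqref{pqrel} (the form $[n+1]_{p,q} = q^n + p^{-1}[n]_{p,q}$, rather than $p^{-n} + q[n]_{p,q}$), which is precisely what the commutation relations produce when $z$ is pushed past $x$ or $y$ on the correct side.
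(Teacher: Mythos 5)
Your proof is correct and follows essentially the same route as the paper: induction on $n$, peeling off one factor of $x$ (resp.\ $y$) and using $zx=p^{-1}xz$ (resp.\ $yz=p^{-1}zy$) together with the recursion $[n+1]_{p,q}=q^n+p^{-1}[n]_{p,q}$ from \eqref{pqrel}. The only difference is that the paper writes out \eqref{ident1} and leaves \eqref{ident2} to the reader, whereas you sketch the symmetric computation explicitly.
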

  
\begin{proof}
We prove \eqref{ident1} by induction. 
The statement for $n=1$ is clear from the defining identity for $\hpq$. 
Assume true for $n=k$. 
For $n=k+1$ we have
\begin{align*}
  yx^{k+1} 
    &= (q^{k}x^ky + [k]_{p,q} x^{k-1} z)x \\
    &= q^{k}x^k(yx) + [k]_{p,q} x^{k-1} (z x) \\
    &= q^{k}x^k (q xy + z) + p^{-1} [k]_{p,q} x^k z \\
    &= q^{k+1}x^{k+1} y + (q^{k} + p^{-1} [k]_{p,q}) x^k z \\
    &= q^{k+1}x^{k+1} y + [k+1]_{p,q} x^k z \text{ by } \eqref{pqrel}.
\end{align*}
The proof of \eqref{ident2} is similar and left to the reader.
\end{proof}

In order to distinguish between the independent and
dependent parameter cases, we examine the prime ideals of $\hpq$.
Fix $p,q \in \kk^\times$ and for this remainder of this section let $H=\hpq$.

Define $\theta=(1-pq)yx-z$. 
Then $\theta$ is normal in $H$.
In particular, $\theta z = z\theta$, 
$\theta x = q x \theta$, and $\theta y = q\inv y \theta$. 
In $H/\theta H$ we have
\[ 0 	= \oy\ox-q\ox\oy-\oz 
		= \oy\ox - q\ox\oy - (1-pq)\oy\ox 
		= pq(\oy\ox-p\inv \ox\oy).\]
Thus, $H/\theta H \iso \qpp$. 
Clearly, $H/zH \iso \qp$.
Thus, the ideals $zH$ and $\theta H$ are prime in $H$. 
In the special case that $pq=1$, $zH=\theta H$.
Understanding these prime ideals allows us to determine when
$H$ is a \textit{polynomial identity (PI) ring}, 
that is, there exists a polynomial 
$f(\xi_1,\hdots,\xi_n) \in \ZZ\langle \xi_1,\hdots,\xi_n \rangle$
such that $f(h_1,\hdots,h_n)$ for all $h_i \in H$.

\begin{prop}
The algebra $H$ is PI if and only if $p$ and $q$ are
primitive roots of unity.
\end{prop}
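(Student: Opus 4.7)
The plan is to prove both directions separately. For the forward direction I use that the class of PI rings is closed under passage to factor rings, combined with the identifications $H/zH \iso \qp$ and $H/\theta H \iso \qpp$ established just above. The quantum plane with parameter $c$ is PI if and only if $c$ is a root of unity: when $c$ has order $m$ the $m$-th powers of the generators are central and the algebra is module-finite over $\kk[u^m,v^m]$, while if $c$ is not a root of unity the only central elements are scalars, so the algebra is not module-finite over its center and cannot be PI (by the Artin--Procesi theorem). Applied to $\qp$ and $\qpp$, this forces both $q$ and $p^{-1}$, and hence $p$ and $q$, to be roots of unity.

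For the converse, assume $p$ and $q$ are roots of unity and set $N = \text{lcm}(\text{ord}(p), \text{ord}(q))$, so $p^N = q^N = 1$. The goal is to exhibit a commutative central subring of $H$ over which $H$ is module-finite. First, $z^N$ is central since $z^N x = p^{-N} x z^N = x z^N$ and $z^N y = p^N y z^N = y z^N$. For $x^N$, Lemma \ref{idlm1} gives
\[
yx^N \;=\; q^N x^N y + [N]_{p,q}\, x^{N-1} z.
\]
Provided $p \neq q^{-1}$, the quotient form of \eqref{pqdef} gives $[N]_{p,q} = (q^N - p^{-N})/(q - p^{-1}) = 0$, so $yx^N = x^N y$; combined with $zx^N = p^{-N}x^N z = x^N z$ this shows $x^N$ is central, and the symmetric identity \eqref{ident2} yields the same conclusion for $y^N$. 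Thus $Z_0 := \kk[x^N, y^N, z^N]$ is a commutative central subring, and from the PBW basis of $H$ (the $N$-th powers being freely movable) one reads off that $H = \sum_{0 \le a,b,c < N} x^a y^b z^c \cdot Z_0$ is a finite $Z_0$-module. Therefore $H$ is PI.

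This leaves only the inverse subcase $p = q^{-1}$, which coincides with the one-parameter Kirkman--Small quantum Heisenberg algebra; PI-ness at roots of unity in that setting is established in \cite{kirksmall} and revisited in Section \ref{sec.inv} via a different central subring. This inverse case is the main obstacle and the reason the backward direction cannot be handled with a single uniform argument: the closed-form expression for $[N]_{p,q}$ has denominator $q - p^{-1} = 0$, and the sum expression degenerates to $Nq^{N-1}$, which is nonzero in characteristic zero, so the naive candidates $x^N$ and $y^N$ are no longer central and a finer construction is required.
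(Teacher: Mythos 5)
Your forward direction, and your backward direction in the subcase $pq \neq 1$, are correct and essentially the paper's own argument (you take $N=\mathrm{lcm}$ of the orders where the paper uses the product $mn$; same idea, and your observation that $[N]_{p,q}=0$ requires $q \neq p^{-1}$ is sharper than the paper, whose proof silently uses $[mn]_{p,q}=0$ even though the sum in \eqref{pqdef} collapses to $\sum_{i=0}^{mn-1}(pq)^i = mn \neq 0$ when $pq=1$). The genuine gap is your handling of the inverse subcase $pq=1$: you correctly isolate it as the obstruction, but then defer it to \cite{kirksmall} with the assertion that a ``finer construction'' of a large center exists there. No such construction exists, because the statement is in fact \emph{false} in that subcase, so this step cannot be repaired by any citation.

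Concretely, let $q$ be a primitive $n$-th root of unity with $n \geq 2$ and $p=q^{-1}$, so that $p$ and $q$ are both primitive $n$-th roots of unity. In the localization $H[z^{-1}]$ (which exists since $z$ is normal and regular) the relation $zx=p^{-1}xz=qxz$ gives $xz^{-1}=qz^{-1}x$, hence
\[
(z^{-1}y)x - x(z^{-1}y) \;=\; z^{-1}(yx - qxy) \;=\; z^{-1}z \;=\; 1,
\]
so $x$ and $z^{-1}y$ generate a nonzero homomorphic image of the first Weyl algebra $A_1(\kk)$, which by simplicity of $A_1(\kk)$ in characteristic zero is a faithful copy of it. If $H$ were PI, then (being a prime ring) its quotient division ring would satisfy a polynomial identity by Posner's theorem, and so would every subring of it, including $H[z^{-1}]$ and hence $A_1(\kk)$ --- a contradiction. (Alternatively: Proposition \ref{inv.primes} shows every nonzero prime of $H_{q^{-1},q}$ contains $z$, so $z$ would lie in the Jacobson radical of $H$, which vanishes for a prime affine noetherian PI algebra.) So the proposition requires the hypothesis $pq \neq 1$, and in the inverse case your proof --- like the paper's --- does not go through because the conclusion itself fails.
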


\begin{proof}
Suppose $q$ is not a root of unity so that $\qp$ is not PI. 
Since $\qp$ is a quotient of $H$ and quotients of PI rings are again 
PI, then $H$ is not PI. Similarly for $p$.

Suppose $p$ and $q$ are primitive roots of unity of order 
$n$ and $m$, respectively. 
Once we show that 
$\kk[x^{mn},y^{mn},z^n] \subset \cnt(\hpq)$,
the result follows as $H$ is finitely 
generated as a module over its center \cite[Corollary 1.13 (i)]{mcrob}.
That $z^n$ commutes with $x$ and $y$ is clear and similarly that 
$x^{mn}$ and $y^{mn}$ commute with $z$. 
It remains to show that $x^{mn}$ and $y^{mn}$ commute with 
$y$ and $x$, respectively. 
Recall by \eqref{ident1} and \eqref{ident2} we have,
\begin{align*}
 yx^{mn} &= q^{mn}x^{mn}y + [mn]_{p,q} x^{mn-1}z = x^{mn}y, \\
 y^{mn}x &= q^{mn}xy^{mn} + [mn]_{p,q}zy^{k-1} = xy^{mn}.
\end{align*}
\end{proof}

By \cite[Theorem 4.9]{jorwells}, the ideals $zH$ and $\theta H$
are the only height one prime ideals in the independent parameter case.
We are interested in the dependent parameter case.
In this case, we recall the skew polynomial construction
$R[y;\sigma,\delta]$ with $R=\qpp=\kk[z][x;\alpha]$ from
Proposition \ref{prop.skpoly}.
Let $\mC$ be the Ore set of $R$ generated by $x$ and $z$.
By an abuse of notation, we denote the Ore set of $H$ 
generated by $x$ and $z$ also by $\mC$.
We denote $R\mC\inv$ and $H\mC\inv$ by $\mR$ and $\mH$, respectively.
When $p$ is not a root of unity, $\mR$ is a simple ring
\cite[Proposition 1.3]{mcpet}.

\begin{prop}
\label{dep.inner}
Suppose $p^r=q^s$ for some $r,s \in \ZZ$, $pq \neq 1$.
Then $\delta$ is an inner $\sigma$-derivation and 
$\sigma^r$ an inner automorphism on $\mR$.
\end{prop}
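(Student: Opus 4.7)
The plan is to construct explicit elements of $\mathcal{R}$ that implement $\delta$ and $\sigma^r$ as inner operations, verifying each identity on the generators $x, z$ of $R$.

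\emph{Inner $\sigma$-derivation.} Since $pq \neq 1$, the element $t := (1-pq)\inv z x\inv$ lies in $\mathcal{R}$. Because $\sigma$-derivations are determined by their values on algebra generators, it suffices to check $\delta(r) = tr - \sigma(r)t$ for $r = z$ and $r = x$. The quantum plane relation $xz = pzx$ yields $x\inv z = p\inv z x\inv$, and under this identity both $tz$ and $\sigma(z)t$ collapse to $(1-pq)\inv p\inv z^2 x\inv$, giving $tz - \sigma(z)t = 0 = \delta(z)$. For $x$,
\[
 tx - \sigma(x)t = (1-pq)\inv\bigl(z - q\,xzx\inv\bigr) = (1-pq)\inv(1 - pq)\,z = z = \delta(x),
\]
using $xzx\inv = pz$. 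Hence $\delta$ is the inner $\sigma$-derivation implemented by $t$.

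\emph{Inner automorphism.} The units of $\mathcal{R}$ are the scalar multiples of monomials $x^i z^j$ with $(i,j) \in \ZZ^2$. Using $xz = pzx$ and its consequences for inverses, one obtains the conjugation formulas
\[
 (x^i z^j)\inv x (x^i z^j) = p^j\,x, \qquad (x^i z^j)\inv z (x^i z^j) = p^{-i}\,z.
\]
Since $\sigma^r(x) = q^r x$ and $\sigma^r(z) = p^{-r} z$, a unit $a \in \mathcal{R}^\times$ implements $\sigma^r$ by conjugation precisely when its exponents $(i,j)$ satisfy $i = r$ and $p^j = q^r$. The hypothesis $p^r = q^s$ provides exactly the multiplicative relation in $\kk^\times$ needed to produce such integer exponents, and the corresponding monomial $a$ implements $\sigma^r$ on the generators, hence on all of $\mathcal{R}$.

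\emph{Main obstacle.} The argument has no deep steps; it rests entirely on the commutation rules $xz = pzx$ and $x\inv z = p\inv z x\inv$, the scalar invertibility of $1 - pq$, and the matching of exponents between a monomial in $x$ and $z$ and the target automorphism $\sigma^r$ on generators, which is precisely where $p^r = q^s$ is used.
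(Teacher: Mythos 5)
Your treatment of $\delta$ is correct and is exactly the paper's argument: the same element $t=(1-pq)^{-1}zx^{-1}$ appears there, and the verification of $\delta(z)=tz-\sigma(z)t$ and $\delta(x)=tx-\sigma(x)t$ on the generators $x,z$ is the identical computation.

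The inner-automorphism half has a genuine gap at its final step. You correctly reduce the problem to finding a monomial $x^iz^j$ with $p^{-i}=p^{-r}$ and $p^j=q^r$, but the claim that the hypothesis $p^r=q^s$ ``provides exactly the multiplicative relation needed to produce such integer exponents'' does not hold: $p^r=q^s$ does not in general imply $p^j=q^r$ for any integer $j$. Concretely, take $t\in\kk^\times$ not a root of unity and set $p=t^3$, $q=t^2$, so that $p^2=q^3$ (i.e.\ $r=2$, $s=3$) and $pq\neq 1$; then $q^2=t^4$ is not an integer power of $p=t^3$, so by your own classification of the units of $\mathcal{R}$ no conjugation realizes $x\mapsto q^2x$, and $\sigma^2$ is not inner. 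What the hypothesis actually yields is that $\sigma^s$ is inner, implemented by $a=z^rx^s$: conjugation by $a$ sends $x\mapsto p^rx=q^sx=\sigma^s(x)$ and $z\mapsto p^{-s}z=\sigma^s(z)$. You have in fact run into an index transposition in the paper itself: its proof takes $a=q^rz^sx^r$, computes $a^{-1}xa=p^sx$, and then writes $p^sx=q^rx$, silently invoking the convention $q^r=p^s$ from Section 2 (the same convention under which $\Omega=(yx-p^{-1}xy)^rz^s$ is central later) rather than the hypothesis $p^r=q^s$ as printed. The repair is to swap $r$ and $s$ consistently in the statement; it is not possible to derive the exponent relation $p^j=q^r$ from the stated one, and that derivation is precisely where your argument breaks.
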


\begin{proof}
Let $\beta=(1-pq)\inv$ and $t=\beta zx\inv$. 
Then
\begin{align*}
  tz-\sigma(z)t 
  	&= (\beta zx\inv)z - \sigma(z)(\beta zx\inv) 
  	= \beta z (x\inv z - p\inv  zx\inv) = \delta(z)=0, \\
  tx-\sigma(x)t 
  	&= (\beta zx\inv)x - \sigma(x)(\beta zx\inv) 
  	= \beta (z - q(xz) x\inv)
    = \beta (z-pqz) = z =\delta(x).
\end{align*}

Let $a=q^r z^s x^r$. Then
\begin{align*}
	a\inv x a 
		&= x^{-r} z^{-s} x z^s x^r
		= p^s x x^{-r} z^{-s} z^s x^r = q^r x = \sigma^r(x), \\
	a\inv z a 
		&= x^{-r} z^{-s} z z^s x^r
		= p^{-r} x^{-r} z^{-s} z^s x^r z = p^{-r} z = \sigma^r(z).
\end{align*}
\end{proof}

\begin{remark}
An immediate consequence of Proposition \ref{dep.inner}
is that $\mH \iso \mR[a(y-t)^r]$.
There is no loss in assuming $\gcd(r,s)=1$ henceforth.
Thus, if $r,s < 0$ then there is no loss in assuming $r,s > 0$.
We will make this assumption, but this will leave the case
when $r$ and $s$ have different signs but $pq \neq 1$.
It should be possible to construct an analog of the 
Hayash-Weyl algebra in this case, but it would involve 
taking an appropriate localization of $H$ before factoring.
We will not deal with that case here.
\end{remark}

\begin{cor}
\label{cor.center}
Suppose $p^r=q^s$ for some $r,s \in \ZZ_+$, 
with $\gcd(r,s)=1$, $pq\neq 1$.
Assume $H$ is not PI.
\begin{itemize}
	\item[(1)] The center of $\mH$ (and hence, $H$)
is generated by $\Omega := (yx - p\inv xy)^r z^s$.
	\item[(2)] The prime ideals of $H$ which lie over $0$
	in $R$ are of the form $(\Omega - \alpha)H$ 
	for $\alpha \in \kk^\times$.
\end{itemize}
\end{cor}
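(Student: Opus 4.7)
The plan is to push everything to the Ore localization $\mH$ provided by Proposition \ref{dep.inner}, exploit the simplicity of $\mR$ to pin down the center, and transfer both statements back to $H$. Setting $w := y - t$ with $t = (1-pq)^{-1}zx^{-1}$, Proposition \ref{dep.inner} absorbs the inner derivation and produces $\mH \iso \mR[w;\sigma]$ as a genuine skew polynomial ring. The non-PI hypothesis forces $p$ (equivalently $q$) not to be a root of unity, whence $\mR$ is simple by \cite[Proposition 1.3]{mcpet} and $Z(\mR) = \kk$.

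For part (1), I would first verify directly from the defining relations that $\omega := yx - p^{-1}xy$ satisfies
\[
\omega x = qx\omega, \quad \omega y = q^{-1}y\omega, \quad \omega z = z\omega.
\]
Combined with the parameter relation $p^r = q^s$, these identities place $\Omega$ inside $Z(H) \subseteq Z(\mH)$. To show conversely that $Z(\mH) \subseteq \kk[\Omega]$, I would expand an arbitrary $f = \sum_i c_i w^i \in Z(\mH)$ with $c_i \in \mR$; the condition $fr = rf$ for $r \in \mR$ forces each nonzero $c_i$ to implement $\sigma^i$ as an inner automorphism of $\mR$. Since the units of $\mR$ are $\kk^\times$-multiples of the monomials $z^n x^m$, a short computation with the formulas $\sigma^i(x) = q^i x$, $\sigma^i(z) = p^{-i}z$ — combined with $p^r = q^s$ and $\gcd(r,s)=1$ — identifies exactly which indices $i$ admit an inner $\sigma^i$ and pins down the corresponding $c_i$ up to a scalar in $\kk^\times$ (via $Z(\mR)=\kk$). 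The remaining condition $fw = wf$, equivalently $\sigma(c_i) = c_i$, then collapses $f$ to a polynomial in $\Omega$, giving $Z(\mH) = \kk[\Omega]$. Finally, $Z(H) = Z(\mH) \cap H = \kk[\Omega]$ since $\Omega \in H$ and $\mH$ is an Ore localization of $H$.

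For part (2), primes of $H$ disjoint from $\mC$ correspond bijectively to primes of $\mH$ under extension-contraction. Simplicity of $\mR$ forces any nonzero ideal of $\mH$ to intersect $\mR$ trivially, and the classical primes-over-simple-base argument for skew polynomial rings (in the spirit of \cite{jorwells}) then forces every nonzero prime of $\mH$ to contain some element $\Omega - \alpha$ with $\alpha \in \kk$. The case $\alpha = 0$ is ruled out: $\Omega$ equals a unit of $\mR$ times a fixed power of $w$, so passing to $\mH/\Omega\mH$ makes $w$ nilpotent and destroys primeness. For $\alpha \in \kk^\times$, $w$ becomes a unit in $\mH/(\Omega - \alpha)\mH$, identifying this quotient with a localization of $\mR$ — itself simple, hence prime. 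Contracting back to $H$ yields exactly the stated ideals $(\Omega - \alpha)H$.

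The main obstacle I foresee is the combined step of identifying which powers of $\sigma$ are inner on $\mR$, checking that the implementing elements are unique up to $\kk^\times$, and verifying that $\sigma$-invariance of the $c_i$ holds automatically under $p^r = q^s$. The bookkeeping here is tight, and the correct conclusion depends sensitively on whether the admissible indices sit in $r\ZZ$ or $s\ZZ$; the exponents appearing in the generator $\Omega$ must match the outcome of this analysis. The descent from $\mH$ to $H$ in both parts is more routine but still warrants care.
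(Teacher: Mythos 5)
Your strategy coincides with the paper's at every structural point: both pass to $\mH=\mR[y-t;\sigma]$ via Proposition \ref{dep.inner}, use the simplicity of $\mR$ (valid because the non-PI hypothesis forces $p$, and hence $q$, to be a nonroot of unity) to control the center and the primes lying over $0$, and then descend to $H$. The differences are in how the two remaining facts are supplied. For (1), you propose to re-derive from scratch which powers $\sigma^i$ are inner on $\mR$ and which coefficients $c_i$ can occur in a central $\sum c_i w^i$; the paper instead takes the central element $a(y-t)^r$ already produced by Proposition \ref{dep.inner} and identifies it with a scalar multiple of $\Omega$ via the short computation $x(y-t)=(1-pq)\inv q\inv\theta$, whence $x^r(y-t)^r$ is a scalar times $\theta^r$, $a(y-t)^r$ is a scalar times $\theta^r z^s$, and $\theta=-pq(yx-p\inv xy)$. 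That identification is precisely the ``bookkeeping'' you flag as the main obstacle, and it is essentially the entire content of the paper's proof of (1); your inner-automorphism analysis is a legitimate, more self-contained substitute, but it does not remove the need to match the abstract degree-$r$ central generator with the concrete element $\Omega$, and the two-line computation above is the cleanest way to do that. For (2) the paper simply cites \cite[Corollary 2.3]{lemat} for the bijection between primes lying over $0$ and the ideals of $\kk[\Omega]$ other than $(\Omega)$; your hands-on version reaches the same conclusion, though note that $\mH/(\Omega-\alpha)\mH$ for $\alpha\neq 0$ is not literally a localization of $\mR$ (it is a free $\mR$-module of rank $r$ on which $w$ acts as a unit normalizing $\mR$); its primeness still follows from the correspondence or from a minimal-length argument. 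One caution for when you carry out the direct centrality check: with $\omega=yx-p\inv xy$ one has $\omega x=qx\omega$ and $z^sx=p^{-s}xz^s$, so $\Omega x=q^rp^{-s}x\Omega$ and centrality needs $q^r=p^s$, the convention actually used in the proof of Proposition \ref{dep.inner} and in Section \ref{bkgrd}, rather than $p^r=q^s$ as literally stated in the corollary; this is an inconsistency in the source to resolve, not a flaw in your plan.
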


\begin{proof}
By Proposition \ref{dep.inner}, $\mH = \mR[a(y-t)^r]$,
and so it is only left to verify that $a(y-t)^r$ is a 
scalar multiple of $\Omega$.
We first claim that $x(y-t) = \lambda \theta$ 
for some $\lambda \in \kk$.
\begin{align*}
  x(y-t) &= x(y-(1-pq)\inv zx\inv) = (1-pq)\inv \left[ (1-pq)(xy) - xzx\inv\right] \\
    &= (1-pq)\inv \left[ q\inv (1-pq) (yx-z) - pz\right] \\
    &= (1-pq)\inv q\inv \left[ (1-pq) (yx-z) - pqz\right] \\
    &= (1-pq)\inv q\inv \theta.
\end{align*}
It now follows inductively that 
$x^r(y-t)^r = \lambda^r q^{1-r}\theta^r$.
Since $z$ commutes with $\theta$, this implies that
$a(y-t)^r$ is a scalar multiple of $\theta^r z^s$.

Finally, we note that
	\[ \theta 
			= (1-pq)yx - z 
			= (1-pq)yx - (yx-qxy) 
			= -pqyx + qxy = -pq(yx-p\inv xy).
	\]
(1) now follows.

By \cite[Corollary 2.3]{lemat},
the primes lying over $0$ in $R$ are in 1-1 correspondence with the 
ideals of $\cnt(R)[\Omega]$, not including 
the ideal generated by $\Omega$.
Since $\cnt(R)=\kk$, (2) follows.
\end{proof}

Returning to the general case, 
we study isomorphisms between the $\hpq$ and 
the automorphism group of $\hpq$.

\begin{prop}
\label{hpq.isos}
If $\hpq \iso H_{p',q'}$ then $(p',q')$ is one of the following tuples: 
$(p,q)$; $(q,p)$; $(p\inv,q\inv)$; $(q\inv,p\inv)$.
\end{prop}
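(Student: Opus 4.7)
The proof divides naturally into two halves: exhibiting isomorphisms for each of the four listed pairs, and then showing by a graded case analysis that no other pair is possible.

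For \emph{existence}, the identity handles $(p,q) \to (p,q)$. For $(p,q) \to (p\inv, q\inv)$, I verify that $x \mapsto y'$, $y \mapsto x'$, $z \mapsto -qz'$ defines an algebra isomorphism $\hpq \iso H_{p\inv, q\inv}$. For $(p,q) \to (q, p)$, I use the swap $x \mapsto y'$, $y \mapsto x'$, $z \mapsto p\inv \theta'$, where $\theta' = (1-pq)y'x' - z'$ is the normal element (now in $H_{q,p}$); well-definedness reduces to the commutation identities $\theta' x' = p x' \theta'$ and $\theta' y' = p\inv y' \theta'$ in $H_{q,p}$, which match the pattern required by the $z$-relations of $\hpq$. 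Composition of these two produces $(p,q) \to (q\inv, p\inv)$.

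For \emph{uniqueness}, let $\phi : \hpq \iso H_{p', q'}$. The first step is to reduce to graded isomorphisms for the natural grading $\deg x = \deg y = 1$, $\deg z = 2$: both algebras are $3$-dimensional AS-regular with Hilbert series $(1-t)^{-3}(1+t)^{-1}$, and the graded pieces are intrinsic (e.g., the degree-$1$ component is the minimal generating subspace of $\hpq$ modulo the canonical height-one normal ideal $z\hpq$). Assuming $\phi$ is graded, it restricts in degree $1$ to $\phi(x) = a x' + b y'$, $\phi(y) = c x' + d y'$ with $ad - bc \neq 0$, and sends $z$ into the two-dimensional space of degree-$2$ normal elements $\kk z' + \kk \theta'$ (valid when $pq \neq 1$), so $\phi(z) = \alpha z' + \beta \theta'$ with $(\alpha, \beta) \neq (0,0)$.

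Applying $\phi$ to each of the three defining relations of $\hpq$ and expanding using $z'x' = (p')\inv x'z'$, $z'y' = p'y'z'$, $\theta'x' = q'x'\theta'$, $\theta'y' = (q')\inv y'\theta'$, together with $y'x' = q'x'y' + z'$, yields a polynomial system in $(a,b,c,d,\alpha,\beta,p',q')$. A case analysis on the vanishing patterns of $(a,b,c,d)$ (constrained by $ad-bc \neq 0$) and of $(\alpha, \beta)$ (not both zero) reduces to finitely many subcases, and in each, comparing coefficients against the third relation forces $(p', q')$ to be one of the four listed pairs. Mismatched candidates like $(p, q\inv)$ or $(q\inv, p)$ are ruled out because they impose non-generic identities such as $pq = 1$ or $p^2 = 1$. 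The \emph{main obstacle} is justifying the graded reduction, which can be handled by invoking the GWA and ambiskew presentations of $\hpq$ and noting that the associated canonical filtration is intrinsic; in the degenerate case $pq = 1$, the four listed pairs partially collapse and the argument adapts.
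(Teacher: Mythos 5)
Your existence half is fine: the map $x \mapsto y'$, $y \mapsto x'$, $z \mapsto -qz'$ to $H_{p\inv,q\inv}$ is exactly the one the paper uses, and your variant $x\mapsto y'$, $y\mapsto x'$, $z\mapsto p\inv\theta'$ into $H_{q,p}$ checks out (the paper's map $x\mapsto i\sqrt{p}\,y$, $y\mapsto i\sqrt{p}\,x$, $z\mapsto-\theta$ is the same up to rescaling). The problem is the uniqueness half. You correctly flag the reduction to graded isomorphisms as the main obstacle, but you do not overcome it: ``the associated canonical filtration is intrinsic'' is an assertion, not an argument, and your proposed intrinsic description of the degree-one piece (``the minimal generating subspace modulo the canonical height-one normal ideal $z\hpq$'') presupposes that an abstract isomorphism already matches $z$ with $z'$, which is part of what must be proved. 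Moreover, the step that actually singles out the four tuples --- in particular the exclusion of candidates such as $(p,q\inv)$ or $(q\inv,p)$ --- is precisely the coefficient comparison you leave as ``a case analysis \dots forces the result.'' Since that is where the content of the proposition lives, this is a genuine gap rather than a routine verification. Note also that a general combination $\alpha z'+\beta\theta'$ with $\alpha\beta\neq 0$ is \emph{not} normal when $p'q'\neq 1$ (the two elements commute past $x'$ by different scalars), so your ansatz $\phi(z)=\alpha z'+\beta\theta'$ is coarser than it needs to be.

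The paper takes a route that bypasses the grading entirely: any isomorphism $\hpq\to H_{p',q'}$ must either fix or switch the two height-one prime ideals $z\hpq$ and $\theta\hpq$, and therefore induces isomorphisms between the quantum-plane quotients $\qp$, $\qpp$ and $\mathcal{O}_{q'}(\kk^2)$, $\mathcal{O}_{p'}(\kk^2)$ in one order or the other. The classification of isomorphisms of quantum planes then confines $p'$ and $q'$ to $\{p^{\pm1},q^{\pm1}\}$, and the coupling of the two exponents (which is exactly what rules out $(p,q\inv)$ and its relatives) is supplied by the cited result of Bavula and Jordan on isomorphisms of generalized Weyl algebras. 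If you want to salvage your coefficient chase, the normality observation above is the missing ingredient: $\phi(z)$ is normal, hence must lie in $\kk z'$ or $\kk\theta'$, which both replaces the unjustified graded reduction and collapses your case analysis to the two cases the paper treats.
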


\begin{proof}
This follows more or less directly from \cite[Lemma 6.5]{bavjor}.
We elaborate briefly in the context of this problem.
The map $\hpq \rightarrow H_{p\inv,q\inv}$ 
is given by $x \mapsto y$, $y \mapsto x$, and $z \mapsto -qz$.
The map $\hpq \rightarrow H_{q,p}$ is given by
$x \mapsto i\sqrt{p}y$, $y \mapsto i\sqrt{p}x$, and 
$z \mapsto -\theta$.

One can now use the fact that any isomorphism either fixes the height one prime ideals $z\hpq$ and $\theta\hpq$,
inducing isomorphisms $\qp \rightarrow \mathcal{O}_{q'}(\kk^2)$ and
$\qpp \rightarrow \mathcal{O}_{p'}(\kk^2)$,
or else it switches them, inducing isomorphisms
$\qp \rightarrow \mathcal{O}_{p'}(\kk^2)$ and
$\qpp \rightarrow \mathcal{O}_{q'}(\kk^2)$.
Thus, these are the only isomorphisms up to composition.
\end{proof}

If $p=q$, then there is an involution $\tau$ of $\hpq$ which 
interchanges the height one prime ideals generated by $z$ and $\theta$.
In particular, this map is given by
$\tau(x)=\sqrt{q}y$, $\tau(y)=\sqrt{q}x$, and
$\tau(z)=\theta$.
It follows easily that $\tau(\theta)=pqz$.

\begin{prop}
\label{autos}
\[ \Aut(\hpq) = 
	\begin{cases}
		(\kk^\times)^2 \rtimes \{\tau\} & p = q^{\pm 1}, \\
		(\kk^\times)^2 & \text{otherwise}.
	\end{cases}\]
\end{prop}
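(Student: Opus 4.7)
The plan is to exhibit the claimed families of automorphisms explicitly and then show every $\phi \in \Aut(\hpq)$ lies in one of them by analyzing its action on a distinguished set of height-one prime ideals.

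First I would verify the torus action: the assignment $\phi_{\alpha,\beta}\colon x\mapsto \alpha x,\ y\mapsto \beta y,\ z\mapsto \alpha\beta z$ sends each defining relation in \eqref{reln.hberg} to a scalar multiple of itself, so this is an algebra automorphism, embedding $(\kk^\times)^2 \hookrightarrow \Aut(\hpq)$. When $p=q$ the involution $\tau$ is already recorded above; for $p=q^{-1}$, I would construct the analogous involution by composing the maps of Proposition \ref{hpq.isos} (noting that when $p=q^{\pm1}$ the tuple $(p,q)$ coincides with one of $(q,p)$, $(q^{-1},p^{-1})$). In each case I would check $\tau^2 = \mathrm{id}$ and that $\tau$ normalizes the torus, realizing the internal semidirect product $(\kk^\times)^2 \rtimes \{\tau\}$.

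For the reverse inclusion, the key step is to show that any $\phi \in \Aut(\hpq)$ stabilizes the set $\{z\hpq,\theta\hpq\}$. In the independent parameter case these are the only height-one primes by \cite[Theorem 4.9]{jorwells}. In the dependent case, the remaining height-one primes are the central $(\Omega-\alpha)\hpq$ of Corollary \ref{cor.center}, but these are distinguished from $z\hpq$ and $\theta\hpq$ by the fact that $\hpq/(\Omega-\alpha)\hpq$ is simple (Theorem \ref{simple}) while $\hpq/z\hpq\iso\qp$ and $\hpq/\theta\hpq\iso\qpp$ are not. In the inverse parameter case $pq=1$ one has $\theta=-z$, so the distinguished set is a singleton and is trivially preserved. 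Thus $\phi$ either fixes both of these ideals or swaps them.

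Now I would split into the two sub-cases. If $\phi$ fixes both primes, then $\phi(z)=\alpha z$ and $\phi(\theta)=\gamma \theta$ for scalars, and $\phi$ descends to automorphisms of the two quantum plane quotients; under the standing hypotheses on $p,q$ these induced automorphisms are toric, and a short compatibility computation using the defining relations pins down $\phi(x)$ and $\phi(y)$ up to scalars, so $\phi \in (\kk^\times)^2$. If $\phi$ swaps the two primes, then it induces an isomorphism $\qp \iso \qpp$ of quantum planes, which by (the quotient version of) Proposition \ref{hpq.isos} forces $p=q^{\pm 1}$; in that case $\phi\circ \tau$ reduces to the fixed sub-case. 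The main obstacle I anticipate is the careful bookkeeping in the roots-of-unity subcases and in the inverse parameter case: there the quantum plane quotients can admit non-toric automorphisms (when their parameter is $\pm 1$) and the two distinguished primes collapse, so one must work a bit harder with the higher-degree structure (the normal element $\theta$, or directly the PI center when both parameters are roots of unity) to rule out any extra automorphisms beyond the torus and $\tau$.
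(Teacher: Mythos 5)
Your route is genuinely different from the paper's: the paper does not prove this proposition directly but defers to Alev--Dumas ($p=q$), to Proposition \ref{inv.primes} ($p=q^{-1}$), and to Carvalho--Lopes for the general case. Your plan --- exhibit the torus and the involution, then show every automorphism permutes the distinguished primes $z\hpq$ and $\theta\hpq$ and analyze the induced maps on the quantum-plane quotients --- is exactly the strategy those references use, so in the generic (non-root-of-unity, $p\neq q^{\pm1}$) situation your outline is sound and has the virtue of being self-contained. One cosmetic point: with the paper's normalization $\tau(x)=\sqrt{q}\,y$, $\tau(y)=\sqrt{q}\,x$, $\tau(z)=\theta$ one gets $\tau^2(x)=qx$, so $\tau^2$ is a nontrivial torus element; the semidirect product is really with the order-two quotient, and literal $\tau^2=\mathrm{id}$ requires rescaling ($\tau(x)=\lambda y$, $\tau(y)=\lambda^{-1}x$).

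There are, however, two places where the plan has a genuine gap rather than mere bookkeeping. First, your criterion for singling out $z\hpq$ and $\theta\hpq$ among the height-one primes in the dependent case --- that $\hpq/(\Omega-\alpha)\hpq$ is simple while the quantum-plane quotients are not --- relies on Corollary \ref{cor.center} and Theorem \ref{simple}, both of which assume $\hpq$ is not PI. When $p$ and $q$ are both roots of unity with $p\neq q^{\pm1}$ (which the proposition does not exclude), $\hpq$ is PI, the quotients $\hpq/(\Omega-\alpha)\hpq$ are no longer simple, and the height-one spectrum is much larger; you acknowledge this case but do not supply a replacement invariant, so the key stabilization step is not established there. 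Second, even when $\phi$ fixes both primes and induces toric automorphisms of $\qp$ and $\qpp$, this only gives $\phi(x)\equiv\alpha x$ modulo $z\hpq$ and modulo $\theta\hpq$; since $z$ and $\theta$ have degree two, this does not by itself exclude corrections of the form $\phi(x)=\alpha x+z\theta h$. Ruling these out requires a further eigenvalue/degree argument using the relations $\phi(z)\phi(x)=p^{-1}\phi(x)\phi(z)$ and $\phi(\theta)\phi(x)=q\phi(x)\phi(\theta)$ (or a GK-dimension bound), and this is precisely the nontrivial content of the cited results of Alev--Dumas and Carvalho--Lopes; ``a short compatibility computation'' understates what is needed.
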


\begin{proof}
The case $p=q$ is due to Alev and Dumas \cite[Proposition 2.3]{alev1}. The case $p=q\inv$ is similar and is, in fact, easier since
$z=\theta$ and every prime ideal contains $z$ even in the root
of unity case (Proposition \ref{inv.primes}).
The general case is due to Carvalho and Lopes 
\cite[Theorem 2.19]{carlopes}.
\end{proof}

\section{The inverse parameter case}
\label{sec.inv}

Throughout this section, fix $q \in \kk^\times$ and set $H=H_{q\inv,q}$,
$q \neq 1$.
When $q$ is a nonroot of unity,
every nonzero prime ideal of $H$ contains $z$ \cite[Proposition2.3]{kirksmall}.
Our approach will arrive at the same result but also include 
the root of unity case.

We retain notation from above.
In particular, the proof of Proposition \ref{dep.inner} shows that
$\sigma$ is an inner automorphism on $\mR$ in this case.
Then $\mH=\mR[ay;\hat{\delta}]$ where 
$a=(zx)\inv$ and $\hat{\delta}=a\delta$. 

\begin{prop}
\label{inv.primes}
Every prime ideal of $H$ contains $z$.
\end{prop}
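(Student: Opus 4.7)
The plan is to exploit the Ore localization $\mH=\mR[w;\hat\delta]$ introduced in the preceding paragraph, where $\mR$ is the quantum torus obtained by inverting the Ore set $\mC$ generated by $x$ and $z$, and the inner twist from $\sigma$ has been absorbed into $w=ay$ so that $\hat\delta$ is a genuine $\kk$-linear derivation of $\mR$.  Let $P$ be a nonzero prime of $H$.  If $P\cap\mC\ne\emptyset$, then $P$ contains some $x^{a}z^{b}$ with $a+b\ge 1$, and primality gives $x\in P$ or $z\in P$; since $z=yx-qxy$, the case $x\in P$ also forces $z\in P$, so $z\in P$ in either subcase.  Otherwise $P\cap\mC=\emptyset$, and by the Ore-localization correspondence $P$ extends to a nonzero prime $P\mH$ of $\mH$, so it suffices to rule out nonzero primes of $\mH$.

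A short computation using $\delta(x)=z$ and $\delta(z)=0$, together with the fact that $\hat\delta$ is a derivation, gives $\hat\delta(z)=0$ and $\hat\delta(x)\ne 0$.  When $q$ is not a root of unity, $\mR$ is simple by \cite[Proposition 1.3]{mcpet}; a standard minimal-$w$-degree argument applied to a putative nonzero two-sided ideal of $\mR[w;\hat\delta]$ then forces it to meet $\mR$, hence to equal $\mH$.  This closes out the nonroot-of-unity case.

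The main obstacle is the root-of-unity case: when $q$ has finite order $n$, the center of $\mR$ becomes $\kk[x^{\pm n},z^{\pm n}]$ and, since $\hat\delta(z^{n})=0$, every ideal of the form $(z^{n}-\alpha)\mR$ is automatically $\hat\delta$-stable.  Handling these will likely require a separate computation of $\hat\delta$ on $x^{n}$ via Lemma~\ref{idlm1} to exhibit its nontrivial action on the central generator $x^{\pm n}$, combined with a characteristic-zero argument showing that no such $\hat\delta$-stable central ideal of $\mR$ can extend to a nontrivial prime of $\mH$, so that the stratification of primes of $\mH$ above primes of $\mR$ still yields only the zero prime disjoint from $\mC$.
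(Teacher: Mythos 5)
Your reduction to showing that $\mH=\mR[w;\hat{\delta}]$ has no nonzero prime ideals is fine and matches the paper's setup, but the core of the argument is missing in the non-root-of-unity case. The ``standard minimal-$w$-degree argument'' does not by itself force a nonzero two-sided ideal of $\mR[w;\hat{\delta}]$ to meet $\mR$: over a simple coefficient ring, a differential operator ring is simple if and only if the derivation is \emph{not inner}, and if $\hat{\delta}=\operatorname{ad}(d)$ for some $d\in\mR$ then $w-d$ is central and generates a proper nonzero ideal meeting $\mR$ in $0$. Knowing only that $\hat{\delta}(x)\neq 0$ does not rule this out. Verifying that $\hat{\delta}$ is not inner on the quantum torus $\mR$ is precisely the computation that constitutes the paper's proof here (write $d=\sum\alpha_{ij}x^iz^j$, note that $\hat{\delta}(z)=0$ forces $d$ to commute with $z$, and compare $dx-xd$ with $\hat{\delta}(x)=x\inv$); this step is absent from your argument and cannot be skipped.

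The root-of-unity paragraph is a plan rather than a proof, and as described it cannot be completed. You correctly observe that $(z^{n}-\alpha)\mR$ is $\hat{\delta}$-stable, but computing $\hat{\delta}(x^{n})$ is irrelevant to that ideal, and no ``characteristic-zero argument'' will show that it fails to extend to a prime of $\mH$: since $z^{n}x=q^{n}xz^{n}=xz^{n}$ and likewise for $y$, the element $z^{n}-\alpha$ is already central in $H$ itself, so $(z^{n}-\alpha)H$ is a proper nonzero two-sided ideal (units of the connected graded domain $H$ are scalars), and any maximal ideal containing it is a prime of $H$ that does not contain $z$. The obstruction you noticed is therefore genuine and not removable; note that the paper's own treatment of this case only tests the maximal $\hat{\delta}$-stable ideals $(x^{n}-a,z^{n}-b)$ and does not address these height-one stable primes either. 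In short: the non-root-of-unity half of your argument has a fillable but essential gap (non-innerness of $\hat{\delta}$), and the root-of-unity half rests on a step that fails.
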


\begin{proof}
By \cite[Lemma 3.21]{goodwa}, every prime ideal of $\mH$ intersects 
$\mR$ in a $\delta$-stable prime ideal. 
In case $q$ is not a root of unity, 
$\mR$ is a simple ring so we need only consider those 
ideals that intersect $\mR$ in zero. 
Let $P$ be a nonzero prime ideal of $\mH$ with $P \cap \mR = 0$. 
Such an ideal exists only if $\hat{\delta}$ is inner on $\mR$. 
We claim $\hat{\delta}$ is not inner on $\mR$.
Write $d = \sum \alpha_{ij} x^i z^j$. Then
\begin{align*}
x\inv &= az = \hat{\delta}(x) = dx-xd 
    = \sum \alpha_{ij} x^{i} (z^jx) - \sum \alpha_{ij} x^{i+1} z^j \\
    &= \sum \alpha_{ij} q^j x^{i+1} z^j - \sum \alpha_{ij} x^{i+1} z^j 
    = \sum \alpha_{ij} (q^j-1) x^{i+1} z^j.
\end{align*}
In order to have equality, we need that $(q^0-1)\alpha_{-1,0}=1$, 
but that is absurd.

In case $q$ is a primitive $n$th root of unity, $q \neq 1$,
it is left to check that the ideals $(x^n-a,z^n-b)$ of $\mR$ 
are not $\hat{\delta}$-invariant. 
Since $\hat{\delta}(x) = x\inv$, 
then an easy induction argument shows 
that $\hat{\delta}(x^n) = nx^{n-2}$. 
Thus $\hat{\delta}(x^n-a) = nx^{n-2}$ and so
these ideals are not $\hat{\delta}$-invariant  
when $n \neq 1,2$.

Suppose $n=2$ and let $I=(x^n-a,z^n-b)\mR$
for $a,b \in \kk^\times$. 
Then
  \[ \hat{\delta}(z(x^2-a)) 
  	= \hat{\delta}(z)(x^2-a) + z \hat{\delta}(x^2-a) 
  	= \hat{\delta}(z)(x^2-a) + 2z.\]
As $x^2-a \in I$, then $\hat{\delta}(z(x^2-a)) \in I$ 
if and only if $2z \in I$, a contradiction.
\end{proof}

\section{The independent parameter case}
\label{sec.indep}

Throughout this section, fix $p,q \in \kk^\times$ such that 
$q^r\neq p^s$ for all $r,s \in \ZZ$ and let $H=\hpq$.
Recall that, in this case, $H$ is primitive and $\cnt(H)=\kk$.
The Hayshi-Weyl algebra in this case was 
studied by Benkart \cite{benkweyl}.
It was also discussed in more generality by Futorny and Hartwig
\cite{mtwa} in the context of multiparameter twisted Weyl algebras.
They reference this as the \textit{generic case}.
Our interest here is to construct the algebra from the two-parameter 
analogs of the Heisenberg algebra as studied above.

Let $\sH$ be the localization of $H$ at powers of $z$ and $\theta$.
In this case $\sH$ is a simple ring by \cite[Theorem 4.9]{jorwells}.
Hence, there is no factor ring of $H$ analogous to the Hayashi-Weyl algebra.
To arrive at Benkart's algebras, we employ 
\textit{noncommutative dehomogenization} \cite{klr}.

Recall, the element $\theta=(1-pq)yx-z$ is normal in $H$
and hence also in $\sH$.
Define $\gamma \in \Aut(\sH)$ by 
$\gamma(h)=\theta\inv h \theta$ for all $h \in \sH$.
Thus, $\gamma$ is an inner automorphism given by 
$\gamma(x)=q\inv x$, $\gamma(y)=qy$, and $\gamma(z)=z$.

We form the skew polynomial ring $T=\sH[\omega;\gamma]$. 
Because $\gamma$ is inner, then $T=\sH[\theta\omega]$
and so the prime ideals of $T$ are 
generated by $\theta\omega-\alpha$ for $\alpha \in \kk$. 
Note that the ideal generated by $\theta\omega-1$ is equivalent to 
that generated by $\theta\inv-\omega$.

We will be interested in the algebra 
$\apq=T/(\theta\inv-\omega)T$. 
Thus, $\apq$ is presented as the $\kk$-algebra on generators 
$\{x,y,z^{\pm 1},\omega^{\pm 1}\}$ subject to the relations,
\begin{align*}
    zx=p\inv x z,& ~zy = p yz, \\
    \omega x = q\inv x \omega,& 	~\omega y = q y \omega, \\
    yx - qxy = z,& ~yx - p\inv xy = \omega\inv.
\end{align*}

\begin{prop}
\label{indep.props}
The algebra $\apq$ is a simple and noetherian with 
global dimension one and GK dimension two.
\end{prop}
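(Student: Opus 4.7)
The plan is to exploit the presentation $\apq = T/(\theta\inv-\omega)T$ given above, with $T = \sH[\omega;\gamma]$ a skew polynomial extension of $\sH$ by the inner automorphism $\gamma$, together with the already cited fact that $\sH$ is simple. Noetherianness falls out via a standard cascade: $\hpq$ is noetherian by Proposition~\ref{prop.skpoly}; the Ore localization $\sH$ inherits this property; the skew polynomial extension $T$ remains noetherian; and $\apq$ is a quotient of $T$.

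For simplicity, I would use that $\gamma$ is inner to rewrite $T$ as $\sH[c]$, a genuine polynomial ring over the simple ring $\sH$ in the central indeterminate $c = \theta\omega$. Combined with $\cnt(\sH) = \kk$ (inherited from $\cnt(\hpq) = \kk$ in the independent parameter case), this puts the two-sided ideals of $T$ in bijection with the ideals of $\kk[c]$. Since $\theta\inv - \omega = -\theta\inv(c - 1)$ and $\theta$ is a unit in $T$, the ideal $(\theta\inv - \omega)T$ coincides with $(c-1)T$, corresponding to the maximal ideal $(c-1) \subset \kk[c]$. Hence $\apq = T/(c-1)T$ is simple.

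For the dimension claims I would realise $\apq$ as a generalized Weyl algebra in the sense of \eqref{defn.gwa}. Direct computation from the two relations $yx - qxy = z$ and $yx - p\inv xy = \omega\inv$ shows that both $xy$ and $yx$ lie in the commutative Laurent ring $B = \kk[z^{\pm 1},\omega^{\pm 1}]$, and a short check confirms that the map $\sigma \in \Aut(B)$ with $\sigma(z) = pz$, $\sigma(\omega) = q\omega$ satisfies $\sigma(yx) = xy$. This identifies $\apq \iso B(\sigma, yx)$. The GK dimension then follows by reading off the PBW basis of this GWA over $B$. Global dimension one is obtained by combining the already-established simplicity with an orbit analysis: in the independent parameter case, the $\sigma$-orbit of any maximal ideal of $B$ is infinite, which forces $\apq$ to be hereditary.

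The main obstacle is the precise global-dimension calculation. GWA theory by itself yields only an upper bound, and sharpness requires simplicity together with the orbit-infinity property, which is genuinely where the standing independent parameter hypothesis $p^r \neq q^s$ is used. A shorter alternative, which I might take in the actual write-up, is to identify $\apq$ directly with Benkart's multiparameter Weyl algebra via the presentation above and then quote the ring-theoretic results of \cite{benkweyl} wholesale.
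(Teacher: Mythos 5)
Your noetherian and simplicity arguments are essentially the paper's own: the paper likewise gets the noetherian property from the skew polynomial/localization/quotient cascade, and simplicity from the identification $T=\sH[\theta\omega]$ with $\theta\omega$ central over the simple ring $\sH$, so that $(\theta\omega-1)T=(\theta\inv-\omega)T$ is maximal. That part is fine.

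The dimension claims are where your proposal breaks down, and the failure is instructive. Your GWA realization is correct: $\apq\iso B(\sigma,a)$ with $B=\kk[z^{\pm1},\omega^{\pm1}]$, $\sigma(z)=pz$, $\sigma(\omega)=q\omega$, $a=yx=(1-pq)\inv(z-pq\,\omega\inv)$. But in the independent parameter case $z$ and $\omega$ are algebraically independent, so $B$ is a genuine rank-two Laurent polynomial ring and the PBW basis $\{z^a\omega^b x^i,\ z^a\omega^b y^j\}$ gives $\GK\apq=\GK B+1=3$, not $2$ --- your own method, carried out honestly, does not produce the stated number. Likewise for global dimension: the ``simple plus infinite $\sigma$-orbits implies hereditary'' theorem you invoke is Bavula's result for GWAs over a \emph{Dedekind} base, and it does not apply when the base has global dimension two. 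Indeed, $\apq$ is free, hence faithfully flat, as a $B$-module, so $\gldim\apq\ge\operatorname{wdim}B=2$ and no argument can yield global dimension one. You should be aware that this tension is already present in the source: the construction gives $\apq=T/(\theta\omega-1)T\iso\sH$ (since $\theta\omega$ is a central polynomial variable over $\sH$ and $\theta$ is a unit), and the paper asserts $\GK\sH=3$ two lines before claiming $\GK\apq=2$; its steps ``simple with finite global dimension forces global dimension one'' and ``a degree-two central element drops $\GK$ by two'' are not valid principles in general. So do not silently defer to \cite{benkweyl} to patch the numbers --- your GWA computation is the reliable part of your write-up, and you should follow it to its actual conclusions rather than to the ones stated in the proposition.
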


\begin{proof}
That $\apq$ is a simple, noetherian domain follows 
from the above construction. 
By Proposition \ref{props1}, $\gldim H=\GK H=3$. 
Because $\gldim \sH \leq \gldim H < \infty$ and because $\sH$ is simple, 
then $\gldim \sH = 1$. 
Thus, $\gldim T = 2$. 
Since $\theta\inv-\omega$ is central in $T$, then $\gldim \apq \leq 1$. 
Since $T$ is not semisimple, then $\gldim \apq=1$. 

As $\sH$ is a localization of $H$, $\GK \sH=\GK H$. 
Thus, $\GK T=1+\GK\sH=4$.
Since $\theta\omega-1$ has degree two, 
then $\GK \apq \leq \GK T-2 = 2$. 
On the other hand, $z$ and $\omega$ generate
a commutative subalgebra of $\apq$ and so 
$2 = \GK \kk[z,w] \leq \GK \apq \leq 2$.
\end{proof}

\section{The dependent parameter case}
\label{sec.dep}

We now arrive at our primary area of focus,
the case wherein 
$p^r=q^s$ for some $r,s \in \ZZ_+$, 
with $\gcd(r,s)=1$, $pq\neq 1$.
Moreover, we assume $H=\hpq$ is not PI. 
We denote the algebra $H/(\Omega - 1)H$ by $\aprs$. 
Thus, $A_p(1,1)=\ap$ and in general $\aprs$ is the 
$\kk$-algebra on generators $x,y,z^{\pm 1}$ satisfying
\begin{align}
\label{aqn1} zx-p\inv xz = zy-pyz &= 0, \\
\label{aqn3} yx-q xy &= z, \\
\label{aqn4} (yx-p\inv xy)^r &= z^{-s}.
\end{align}
Set $w=(yx-p\inv xy)^{r-1} z^s$. 
Then $wx-q\inv xw = wy-qyw = wz-zw = 0$ and 
\eqref{aqn4} becomes 
\begin{align}
\label{aqn5} yx-p\inv xy = w\inv. 
\end{align}
Note that $w=z$ in the case $r=1$.

\begin{prop}
\label{aprs.gwa}
The algebra $\aprs$ is a GWA.
\end{prop}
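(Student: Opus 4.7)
The plan is to inherit the GWA structure on $\hpq$ noted at the end of Section \ref{construct} and push it down to the quotient. Recall from that section that $\hpq \iso D'(\rho, yx)$ with $D' = \kk[yx,z]$, where $\rho \in \Aut(D')$ is determined by $\rho(z) = pz$ and $\rho(yx) = q\inv(yx-z)$. The first step is to observe that $\Omega$ lies in the commutative base $D'$ and is fixed by $\rho$: using $xy = q\inv(yx-z)$ one computes
\[ yx - p\inv xy = (1 - p\inv q\inv)yx + p\inv q\inv z \in D', \]
so $\Omega = ((1 - p\inv q\inv)yx + p\inv q\inv z)^r z^s$ is visibly an element of $D'$. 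Since $\Omega$ is central in $\hpq$ by Corollary \ref{cor.center}, the identity $x\Omega = \Omega x = \rho(\Omega)x$ forces $\rho(\Omega) = \Omega$, so $\Omega - 1$ is $\rho$-stable.

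The second step is to pass to the quotient. Because $\Omega - 1$ is central in $\hpq$ and $\rho$-fixed in $D'$, the usual direct-sum decomposition of a GWA over its base yields $(\Omega - 1)\hpq \cap D' = (\Omega - 1)D'$, so $\aprs \iso \bar D'(\bar\rho, \overline{yx})$, where $\bar D' := D'/(\Omega - 1)D'$. The relation $\Omega = 1$ in $\bar D'$ exhibits $z^{s-1}((1 - p\inv q\inv)yx + p\inv q\inv z)^r$ as a two-sided inverse of $z$ in $\bar D'$, so $z$ is automatically invertible and we may identify $\bar D' \iso \kk[z^{\pm 1}, yx]/(\Omega - 1)$, with $\rho$ extending uniquely.

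Finally, one verifies the GWA axioms for $a = \overline{yx}$ against the presentation \eqref{aqn1}--\eqref{aqn4}: the commutations $xz = \rho(z)x = pzx$ and $yz = \rho\inv(z)y = p\inv zy$ recover \eqref{aqn1}; the equation $xy = \rho(a) = q\inv(yx - z)$ is \eqref{aqn3} rearranged; and \eqref{aqn4} is precisely the relation built into $\bar D'$ by the quotient. I do not expect any single calculation to be difficult; the main piece of care is to confirm that $\Omega$ is $\rho$-fixed (which rests on its centrality) so that the GWA structure cleanly descends, and to check that $z$ genuinely becomes invertible in the quotient so the base ring is as described.
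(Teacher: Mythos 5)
Your argument is correct, but it reaches the GWA structure by a genuinely different route than the paper. The paper simply writes down the data directly: it sets $D=\kk[z^{\pm 1},w^{\pm 1}]$ with $w=(yx-p\inv xy)^{r-1}z^s$, defines $\rho(z)=pz$, $\rho(w)=qw$, rewrites \eqref{aqn3} and \eqref{aqn5} as $yx=(1-pq)\inv(z-pqw\inv)$ and $xy=p(1-pq)\inv(z-w\inv)$, and verifies the axioms \eqref{defn.gwa} with $a=(1-pq)\inv(z-pqw\inv)$. You instead descend the GWA presentation $\hpq\iso\kk[yx,z](\rho,yx)$ from Section \ref{construct} through the central quotient by $\Omega-1$, using that $\Omega$ lies in the base and is $\rho$-fixed (your cancellation of $x$ in $\rho(\Omega)x=\Omega x$ is legitimate because $\hpq$ is a domain). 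The two descriptions agree: in your base the element $u:=yx-p\inv xy$ becomes a unit with $u=w\inv$, so $\kk[yx,z^{\pm 1}]/(\Omega-1)\iso\kk[z^{\pm 1},u^{\pm 1}]/(u^rz^s-1)$. Your route buys a structural explanation --- there is no need to guess the GWA data, and freeness of the quotient over the new base comes for free from the decomposition giving $(\Omega-1)\hpq\cap D'=(\Omega-1)D'$; it also makes explicit the relation $w^r=z^s$ among the paper's generators, which the notation $\kk[z^{\pm 1},w^{\pm 1}]$ quietly suppresses. The paper's route buys the concrete base ring and automorphism in the form that is reused later (Propositions \ref{aprs.iso} and \ref{apn.props}).
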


\begin{proof}
Relations \eqref{aqn3} and \eqref{aqn5} may be rewritten as
\begin{align}
\label{aqn6} yx = \frac{z-pq w\inv}{1-pq}, \\
\label{aqn7} xy = p \frac{z-w\inv}{1-pq}.
\end{align}
Set $D=\kk[z^{\pm 1},w^{\pm 1}]$.
Define $\rho \in \Aut(D)$ by $\rho(z)=pz$ and $\rho(w)=q w$.
If $a=(1-pq)\inv (z - pq w^{-1})$, then it follows
by a direct check of \eqref{defn.gwa} that
$\aprs \iso D(\rho,a)$.
\end{proof}

As before, we denote by
$\mH$ (resp. $\mR$) the localization of $H$ (resp. $R$)
at powers of $x$ and $z$.

\begin{thm}
\label{simple}
The algebra $\aprs$ is a simple noetherian domain.
\end{thm}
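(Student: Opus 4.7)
The plan is to separate the two claims. Noetherianity and the domain property follow immediately from the GWA presentation of Proposition~\ref{aprs.gwa}: $\aprs \iso D(\rho,a)$ with $D = \kk[z^{\pm 1},w^{\pm 1}]$ a commutative noetherian domain and $a = (1-pq)\inv(z - pqw\inv)$ nonzero in $D$. Standard results of Bavula on GWAs over commutative noetherian domains then give that $\aprs$ is both noetherian and a domain.

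For simplicity, it suffices to show that $(\Omega-1)H$ is the only prime of $H$ containing itself, for then $\aprs$ has $0$ as its unique prime and, since every nonzero ring has a maximal (hence prime) ideal, must be simple. Let $P$ be a prime of $H$ with $(\Omega-1)H\subsetneq P$. If $z\in P$, then $z^s\in P$, so $\Omega = (yx - p\inv xy)^r z^s\in P$, and combining with $\Omega-1\in P$ yields $1\in P$. If $x\in P$, then $yx$ and $xy$ both lie in $P$, so $(yx-p\inv xy)^r\in P$, and multiplying by $z^s$ again gives $\Omega\in P$ and thus $1\in P$. Otherwise $x,z\notin P$, and $P$ extends to a prime $P\mH$ of $\mH$. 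Since $H$ is not PI while $p^r = q^s$, neither $p$ nor $q$ is a root of unity, so the quantum torus $\mR$ is simple, every prime of $\mH$ meets $\mR$ in zero, and by Corollary~\ref{cor.center}(2) the nonzero such primes are of the form $(\Omega-\alpha)\mH$ with $\alpha\in\kk^\times$. The containment $(\Omega-1)\mH\subseteq(\Omega-\alpha)\mH$ forces $\alpha = 1$, so $P\mH = (\Omega-1)\mH$ and hence $P = (\Omega-1)\mH\cap H$.

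The main obstacle is the contraction $(\Omega-1)\mH\cap H = (\Omega-1)H$, needed to contradict $P\supsetneq(\Omega-1)H$ and close the argument. To prove it, if $h\in H$ satisfies $hs\in(\Omega-1)H$ for some $s\in\mC$, I pass to $\aprs$: the image $\bar s$ is a monomial in the unit $\bar z$ and the nonzero generator $\bar x$ (nonzero by the GWA basis decomposition), and since the first step already established $\aprs$ to be a domain, $\bar s$ is a non-zero-divisor. Hence $\bar h = 0$ and $h\in(\Omega-1)H$. This is the point where the two halves of the proof interact, and it is essential that the GWA theory be invoked first so that the domain property of $\aprs$ is already in hand when the cancellation is performed.
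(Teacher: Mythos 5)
Your treatment of the noetherian-domain part is the paper's own argument (the GWA presentation of Proposition~\ref{aprs.gwa} plus Bavula's results), and for simplicity you are supplying exactly the justification that the paper compresses into the single assertion that $(\Omega-1)H$ is maximal. The reduction to showing that no prime of $H$ properly contains $(\Omega-1)H$, the disposal of the cases $z\in P$ and $x\in P$, the identification $P=(\Omega-\alpha)H$ via Corollary~\ref{cor.center}(2) and simplicity of $\mR$, and the contraction $(\Omega-1)\mH\cap H=(\Omega-1)H$ via the domain property of $\aprs$ are all sound.

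There is one step that does not follow as written: in the residual case you pass from ``$x,z\notin P$'' to ``$P$ extends to a prime of $\mH$.'' This requires $P\cap\mC=\emptyset$, i.e.\ that no monomial $x^iz^j$ lies in $P$, and since $x$ is \emph{not} normal in $H$ (note $yx=qxy+z\notin xH$) this is not a formal consequence of $x\notin P$ and $z\notin P$ for a noncommutative prime $P$. The gap is genuine but fixable: if $x^iz^j\in P$, normality of $z$ gives $z\in P$ (already excluded) or $x^i\in P$; taking $i$ minimal with $x^i\in P$, identity~\eqref{ident1} yields $yx^i-q^ix^iy=[i]_{p,q}\,x^{i-1}z\in P$, and $[i]_{p,q}\neq 0$ because $(pq)^i=1$ combined with $p^r=q^s$ would force $p^{i(r+s)}=1$, contradicting that $p$ is a nonroot of unity (which follows from $H$ not PI, as you note); hence $x^{i-1}z\in P$, and normality of $z$ pushes the induction down to $x\in P$ or $z\in P$, both already excluded. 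With this inserted your argument is complete, and it is in fact more informative than the proof in the paper.
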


\begin{proof}
That $\aprs$ is a noetherian domain follows from
Proposition \ref{aprs.gwa} and \cite[Proposition 1.3]{bavrep}.
The noetherian condition is also a direct consequence of 
$\aprs$ being a factor of $H$.
Simplicity follows from the maximality of the ideal $(\Omega-1)H$.
\end{proof}

\begin{prop}
\label{aprs.props}
Suppose $p$ is not a root of unity. 
Then $\gldim\aprs = 1$ and $\GK\aprs=2$.
\end{prop}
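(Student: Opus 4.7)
The plan is to handle the two dimensions separately, mirroring the strategy of Proposition \ref{indep.props}.

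For $\GK \aprs = 2$, the lower bound is immediate from Proposition \ref{aprs.gwa}, since the commutative Laurent polynomial subring $D = \kk[z^{\pm 1}, w^{\pm 1}]$ appearing in the GWA presentation has GK dimension $2$. For the upper bound, $\aprs = H/(\Omega - 1)H$ with $\Omega = (yx - p\inv xy)^r z^s$ central in $H$ (Corollary \ref{cor.center}). Under the connected grading on $H$ (with $\deg x = \deg y = 1$ and $\deg z = 2$), $\Omega$ is homogeneous of positive degree and regular, so a standard filtration argument applied to the induced filtration on $\aprs$ yields $\GK \aprs \leq \GK H - 1 = 2$.

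For $\gldim \aprs$, the lower bound $\gldim \aprs \geq 1$ follows from $\aprs$ being a simple noetherian domain that properly contains $D$, and hence is not a division ring. An initial upper bound $\gldim \aprs \leq 2$ comes from $\Omega - 1$ being a central regular element of the ring $H$ of global dimension $3$ (Proposition \ref{props1}); the standard change-of-rings long exact sequence yields $\gldim \aprs \leq \gldim H - 1 = 2$.

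Sharpening to $\gldim \aprs \leq 1$ is the main obstacle. To tackle it, I would invert $x$. Combining Proposition \ref{dep.inner} with the computation in the proof of Corollary \ref{cor.center}, one identifies $\aprs[x\inv] \cong \mH/(\Omega - 1)\mH \cong \mR$, the localized quantum plane. Since $p$ is not a root of unity, $\mR$ is simple, and an argument parallel to that in Proposition \ref{indep.props} gives $\gldim \mR = 1$. The subtle point is transferring this bound from the localization back to $\aprs$ itself: localization only provides $\gldim \mR \leq \gldim \aprs$, so to obtain the reverse inequality I would realize $\aprs$ as a central-element quotient of an intermediate ring of global dimension $2$, mirroring the role of $T = \sH[\omega;\gamma]$ in Proposition \ref{indep.props}. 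A natural candidate arises from the GWA presentation of Proposition \ref{aprs.gwa} via a skew polynomial extension of $D$; the technical difficulty is verifying that this intermediate ring has global dimension exactly $2$, which should follow from the structure of the extension together with $\gldim \mR = 1$.
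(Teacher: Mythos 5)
There are two genuine problems. First, your lower bound for the GK dimension does not work as stated: in $\aprs$ the elements $z$ and $w$ are not algebraically independent. Since $w\inv = yx - p\inv xy$ and $(yx-p\inv xy)^r = z^{-s}$, one has $w^r = z^s$, so the commutative subalgebra generated by $z^{\pm 1}$ and $w^{\pm 1}$ is $\kk[z^{\pm 1},w^{\pm 1}]/(w^r - z^s) \iso \kk[u^{\pm 1}]$ (using $\gcd(r,s)=1$), which has GK dimension $1$, not $2$. (This also shows the base ring $D$ in Proposition \ref{aprs.gwa} must be read as this rank-one quotient; otherwise the GWA would have GK dimension $3$.) The paper gets the lower bound differently: $\GK \aprs \le 2$ forces $\GK\aprs \in \{0,1,2\}$, and an affine algebra of GK dimension at most one is PI (Small--Stafford--Warfield), which $\aprs$ is not when $p$ is not a root of unity. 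Your argument could be repaired by exhibiting a genuinely two-dimensional subalgebra, e.g.\ $\kk[z^{\pm 1}][x;\alpha]$, but as written it rests on a false premise.

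Second, the inequality $\gldim\aprs \le 1$ --- which you correctly identify as the main obstacle --- is left unproved, and your proposed repair is both incomplete and partly incorrect. The identification $\aprs[x\inv] \iso \mR$ fails for $r>1$: since $\Omega$ is a unit multiple of $(y-t)^r$ over $\mR$, the quotient $\mH/(\Omega-1)\mH$ is a free $\mR$-module of rank $r$, not $\mR$ itself. More importantly, manufacturing an intermediate ring of global dimension exactly $2$ with $\aprs$ as a central quotient is precisely the hard step, and you give no argument for it; the candidate $\sH$ is not simple in the dependent case, so the mechanism of Proposition \ref{indep.props} does not transfer. The paper instead uses the classical Weyl-algebra device: form $B_1 = \aprs\tensor_{\kk[x]}\kk(x)$ and $B_2 = \aprs\tensor_{\kk[y]}\kk(y)$, show each is a central regular quotient of $\mH \iso \mR[y-t;\sigma]$ (which has global dimension $2$) and hence has global dimension $1$, and then conclude from the fact that $B_1\oplus B_2$ is faithfully flat over $\aprs$ that $\gldim\aprs \le \max\{\gldim B_1,\gldim B_2\} = 1$. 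The key point you are missing is that inverting $x$ and inverting $y$ \emph{together} detect all modules, so the localized bound does descend; a single localization, as you note, only gives the useless inequality in the other direction.
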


\begin{proof}
Since the ideal $(\Omega-1)H$ is generated by 
a central non-zero divisor, then $\GK\aprs \leq 2$. 
A ring of GK dimension one is necessarily PI.
Hence, if $p$ is not a root of unity, then $\GK\aprs=2$.

That $\gldim\aprs=1$ follows analogously to the Weyl algebra. 
If $r=1$, then this follows by \cite[Theorem 1.6]{bav.tensor}.
Observe that $\aprs$ is free as a $\kk[x]$-module (see below) 
and so $1 \leq \aprs$. 
Now define $B_1 = \aprs \tensor_{\kk[x]} \kk(x)$ and 
$B_2 = \aprs \tensor_{\kk[y]} \kk(y)$. 
Then $B_1 \iso B_2$ and moreover,
$B_1 \iso \mH/(\Omega - 1)\mH$, 

By Proposition \ref{dep.inner} $\mH \iso \mR[y-t;\sigma]$.
Since $\gldim\mR = 1$ \cite[Corollary 3.10]{mcpet}, 
then $\gldim\mH = 2$. 
The element $\Omega-1$ is central and regular, 
so $\gldim B_1 = 1$. 
Let $B=B_1 \oplus B_2$. 
Since $B$ is a faithfully free $\aprs$-module, then
$\gldim\aprs \leq \gldim B = \max\{\gldim B_1,\gldim B_2\} = 1$.
\end{proof}

\begin{prop}
\label{aprs.iso}
Let $s$ and $t$ be positive integers. 
If $A_p(1,s) \iso A_p(1,t)$, then $s=t$.
\end{prop}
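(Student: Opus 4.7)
The plan is to use the generalized Weyl algebra description from Proposition \ref{aprs.gwa} and extract from it a structural invariant that counts $s$. Specialized to $r=1$, the auxiliary generator $w = (yx - p^{-1}xy)^{r-1}z^s$ collapses to $w = z^s$, so the base ring $D$ reduces to $\kk[z^{\pm 1}]$ and $A_p(1,s) \iso D(\sigma, a_s)$ with $\sigma(z) = pz$ and
\[a_s = (1-pq)^{-1}(z - pq\,z^{-s}) = (1-pq)^{-1} z^{-s}(z^{s+1} - q^{s+1}),\]
using $q^s = p$ so $pq = q^{s+1}$. Thus $a_s$ has $s+1$ simple zeros on $\mathrm{Spec}(D) = \kk^\times$, at $q \cdot \mu_{s+1}$. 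Since $p$ is not a root of unity (as $H$ is non-PI), these zeros lie in $s+1$ distinct $\sigma$-orbits of $\kk^\times$: $q\zeta_1$ and $q\zeta_2$ lie in a common orbit only if $p^n = \zeta_2/\zeta_1 \in \mu_{s+1}$ for some $n$, which forces $n = 0$ and $\zeta_1 = \zeta_2$.

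Suppose $\phi: A_p(1,s) \to A_p(1,t)$ is a $\kk$-algebra isomorphism, with target generators $X, Y, Z$ and $D_t = \kk[Z^{\pm 1}]$. I would first show $\phi(z) = \lambda Z^{\pm 1}$ for some $\lambda \in \kk^\times$. The image $\phi(z)$ is a unit in $A_p(1,t)$, and $A_p(1,t)$ is a $\ZZ$-graded domain via $\mathrm{Ad}(Z)$-eigenspaces: the degree-$n$ piece is rank-one over $D_t$, and neither $X$ nor $Y$ is a unit (because $YX = a_t(Z) \in D_t$ has zeros in $\kk^\times$). A leading-coefficient argument in this graded domain (as in \cite[Lemma 6.5]{bavjor}, the same tool that underlies Proposition \ref{hpq.isos}) forces every unit into the degree-$0$ part $D_t$, and $D_t^\times = \kk^\times Z^\ZZ$. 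Writing $\phi(z) = \mu Z^k$ and matching the eigenvalue spectrum $\{p^n : n \in \ZZ\}$ of $\mathrm{Ad}(z)$ with the spectrum $\{p^{kn} : n \in \ZZ\}$ of $\mathrm{Ad}(Z^k)$ then forces $k = \pm 1$.

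Take $\phi(z) = \lambda Z$ (the case $k = -1$ is parallel, with the roles of $X$ and $Y$ swapped). Matching $\mathrm{Ad}$-eigenspaces gives $\phi(x) = X g(Z)$ and $\phi(y) = h(Z) Y$ for some $g, h \in D_t$. Applying $\phi$ to the relation $yx = a_s(z)$ and using $YX = a_t(Z) \in D_t$ yields the identity
\[g(Z)\,h(Z)\,a_t(Z) = a_s(\lambda Z)\]
in $\kk[Z^{\pm 1}]$, so $a_t$ divides $a_s(\lambda Z)$. Each zero $q_t \eta$ of $a_t$ (with $\eta \in \mu_{t+1}$) must then satisfy $\lambda q_t \eta \in q_s \mu_{s+1}$, whence $\eta^{s+1} = (q_s/(\lambda q_t))^{s+1}$, a constant independent of $\eta$. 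Taking $\eta = 1$ shows the constant equals $1$, so $\eta^{s+1} = 1$ for all $\eta \in \mu_{t+1}$, i.e., $(t+1) \mid (s+1)$. Running the same argument with $\phi^{-1}$ in place of $\phi$ gives $(s+1) \mid (t+1)$, and hence $s = t$.

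The main obstacle is the preliminary step constraining $\phi(z) = \lambda Z^{\pm 1}$; this reduces to classifying the units of $A_p(1,t)$, which the leading-coefficient argument in the $\ZZ$-graded domain $A_p(1,t)$ handles cleanly. Once this is in place, the remaining divisibility chain is a clean orbit-counting calculation.
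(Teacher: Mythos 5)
Your proof is correct, but it takes a genuinely different route from the paper. The paper simply invokes Bavula--Jordan's classification theorem for isomorphisms of generalized Weyl algebras over $\kk[z^{\pm 1}]$ (their Theorem 5.2), which says $\kk[z^{\pm1}](\rho,a_1)\iso\kk[z^{\pm1}](\rho,a_2)$ iff $a_2(z)=\nu z^{\ell}a_1(\mu z^{\pm1})$, and then compares coefficients of the explicit Laurent polynomials to force $s=t$. You instead re-derive the needed portion of that theorem from scratch: units of the $\ZZ$-graded domain $A_p(1,t)$ are homogeneous and must lie in degree zero (since a homogeneous unit of degree $n>0$ would force $X^nY^n=\prod_{i=1}^{n}\rho^i(a_t)$ to be a unit of $\kk[Z^{\pm1}]$, impossible as $a_t$ has zeros in $\kk^\times$ --- this is the one step you should spell out rather than gesture at a ``leading-coefficient argument''), so $\phi(z)=\mu Z^{k}$; the $\mathrm{Ad}$-eigenvalue spectrum and the non-root-of-unity hypothesis on $p$ force $k=\pm1$ and pin $\phi(x),\phi(y)$ to the degree $\pm1$ components; and then $g\,h\,a_t=a_s(\lambda Z)$ gives divisibility. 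Your final step can even be shortened: $a_t$ has $t+1$ distinct zeros in $\kk^\times$ and divides $a_s(\lambda Z)$, which has $s+1$, so $t\le s$, and symmetry finishes. What your approach buys is self-containedness (no black-box citation) and it extracts a cleaner invariant (the number of $\rho$-orbits of zeros of $a$); what it costs is the extra page of unit/grading bookkeeping that the paper outsources to the literature. Note also that your observation that the $s+1$ zeros lie in distinct $\langle p\rangle$-orbits, while true, is never actually used in your divisibility chain.
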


\begin{proof}
Recall the construction of $\aprs$ as a GWA.
In this case, $w=z$ and so the ring $D$ reduces to $\kk[z^{\pm 1}]$.
Let $a_1(z) = (1-pq)\inv (z - pq z^{-s})$ and 
$a_2(z) = (1-pq)\inv (z - pq z^{-t})$. 
By \cite[Theorem 5.2]{bavjor}, 
$\kk[z^{\pm 1}](\rho,a_1) \iso \kk[z^{\pm 1}](\rho,a_2)$ 
if and only if there exists $\nu,\mu \in \kk^\times$ and 
a positive integer $\ell$ such that 
$a_2(z) = \nu z^\ell a_1(\mu z^{\pm 1})$. 
In the first case,
\begin{align*}
  (1-pq)\inv (z - pq z^{-t}) 
  	&= a_2(z) 
  	= \nu z^\ell a_1(\mu z) 
   	= \nu z^\ell (1-pq)\inv (\mu z - pq (\mu z)^{-s})\\
    &= (1-pq)\inv  (\nu \mu z^{\ell + 1} - pq \nu\mu^{-s}) z^{-s+\ell}.
\end{align*}
Comparing like terms, 
we either have $1=\ell + 1$ so $\ell=0$ and so $s=t$, 
or else $\ell+1=-t$ and $1 = -s+\ell$ so $t+s=-2$, which is absurd. 
The case of $z\inv$ is similar.
\end{proof}

Let $A_p^1(r,s)=\aprs$, then inductively we define 
$\apn = A_p^{n-1}(r,s) \tensor \aprs$ for $n \geq 1$. 
Explicitly, $\apn$ is the algebra on generators 
$\{x_i,y_i,z_i^{\pm 1},w_i^{\pm 1}\}$, $i=1,\hdots,n$, 
subject to the relations,
\begin{align*}
  [x_i,x_j] &= [y_i,y_j] = [z_i^{\pm 1},z_j^{\pm 1}] 
  		= [w_i^{\pm 1},w_j^{\pm 1}] 
  		= [z_i^{\pm 1},w_j^{\pm 1}] = 0, \\
  [x_i,y_j] &= 0 \text{ for } i \neq j, \\
  z_ix_j &= p^{-\delta_{ij}} x_jz_i, 	
  		~~~ z_iy_j = p^{\delta_{ij}} y_jz_i, \\
  w_ix_j &= q^{-\delta_{ij}} x_jw_i, 
  		~~~ w_iy_j = q^{\delta_{ij}} y_jw_i, \\
  y_ix_i &- q x_iy_i = z_i, 
  		~~~ y_ix_i - p\inv x_iy_i = w_i\inv, \\
  w_i &= (y_ix_i-p\inv x_iy_i)^{r-1} z_i^s.
\end{align*}
These relations imply,
\begin{align*}
	y_ix_i = \frac{z_i-pq w_i\inv}{1-pq} 
		\;\;\;\text{and}\;\;\;
	x_iy_i = p \frac{z_i-w_i\inv}{1-pq}.
\end{align*}
This gives the following identity,
\begin{align*}
  [y_ix_j,y_jx_i] &= \delta_{ij} (z_jw_i\inv - z_iw_j\inv).
\end{align*}

\begin{prop}
\label{vn.irred}
There exists a faithful irreducible representation of $\apn$.
\end{prop}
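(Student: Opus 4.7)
The plan is to build a $(p,q)$-oscillator Fock representation for $\aprs$ following Section \ref{bkgrd}, and then to take an $n$-fold tensor product. For the case $n=1$, let $V = \bigoplus_{k \geq 0} \kk v_k$ and define
\begin{align*}
  xv_k = \sqrt{[k+1]_{p,q}}\, v_{k+1}, \quad
  yv_k = \sqrt{[k]_{p,q}}\, v_{k-1}, \quad
  zv_k = p^{-k}v_k,
\end{align*}
with $yv_0 = 0$. This is the adaptation of \eqref{mrep} to the $(p,q)$-setting, using the identification $L = p^{-N}$ recorded in \eqref{aqn-rel}.

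Next I check that the relations \eqref{aqn1}--\eqref{aqn4} hold on $V$. The rescaling relations $zx = p\inv xz$ and $zy = pyz$ are immediate from the eigenvalue structure of $z$. Applying \eqref{pqrel} one finds that $yx-qxy$ acts on $v_k$ as $[k+1]_{p,q}-q[k]_{p,q} = p^{-k}$, matching $z$, and that $yx-p\inv xy$ acts as $q^k$. Consequently $\Omega = (yx-p\inv xy)^r z^s$ acts on $v_k$ as $q^{rk}p^{-sk} = (q^r/p^s)^k = 1$ by the hypothesis $p^r = q^s$, so the action descends to $\aprs$. For irreducibility, $z$ has distinct eigenvalues $\{p^{-k}\}$ (the non-PI hypothesis together with $p^r=q^s$ forces $p$ to not be a root of unity), so any nonzero submodule must contain some $v_k$; applying $y^k$ and then powers of $x$ generates all of $V$, provided each $[i]_{p,q}$ is nonzero. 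Faithfulness is automatic since $\aprs$ is simple by Theorem \ref{simple}.

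For the general case, the inductive definition gives $\apn \iso \aprs^{\tensor n}$, and one takes $V^{\tensor n}$ with the factor-wise action. Corollary \ref{cor.center} gives $\cnt(\aprs) = \kk$, making $\aprs$ a central simple $\kk$-algebra; the tensor power $\apn$ is then again a simple $\kk$-algebra, so faithfulness of $V^{\tensor n}$ follows from the nonvanishing of the representation. For irreducibility, Dixmier's version of Schur's lemma (applicable since $V$ has countable $\kk$-dimension and $\kk$ is uncountable algebraically closed) gives $\mathrm{End}_{\aprs}(V) = \kk$, and the standard outer-tensor-product result then promotes absolute irreducibility of $V$ to that of $V^{\tensor n}$ over $\apn$. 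The main technical obstacle is verifying $\Omega$ acts as $1$ on $V$, which hinges on the precise form of \eqref{pqrel} and the hypothesis $p^r = q^s$; a secondary point is that $[i]_{p,q} \neq 0$ for all $i \geq 1$ (equivalently, $pq$ is not a root of unity) must be assumed to make the Fock representation irreducible.
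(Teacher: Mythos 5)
Your proof is correct and, for $n=1$, essentially coincides with the paper's: your Fock module $V=\bigoplus_{k\ge 0}\kk v_k$ is the paper's module $\kk[\xi]$ after rescaling $v_k$ by $\sqrt{[k]_{p,q}!}$, and the relation checks via \eqref{pqrel} are the same. Where you genuinely diverge is at the inductive step: the paper works directly on $\kk[\xi_1,\hdots,\xi_n]$ and proves irreducibility by a leading-monomial argument (apply $y_1^{m_1}\cdots y_n^{m_n}$ to the lex-maximal monomial of a nonzero element to reach $1$, then apply the $x_i$), whereas you realize $V^n$ as the outer tensor product $V^{\tensor n}$ and invoke Dixmier's version of Schur's lemma (legitimate here, since $\kk$ is uncountable and $V$ has countable dimension) together with the fact that a tensor product of simple modules with scalar endomorphism rings is simple over the tensor product of the algebras. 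Both arguments are valid; the paper's is more elementary and self-contained, while yours outsources the combinatorics to standard structure theory and yields $\cnt(\aprs)=\kk$ and the simplicity of $\apn$ as byproducts (facts the paper defers to Proposition \ref{apn.props}). Two small points. First, $[k]_{p,q}=0$ if and only if $(pq)^k=1$, and under the standing hypotheses of Section \ref{sec.dep} ($r,s>0$ and $p$ not a root of unity) this cannot occur, since $(pq)^k=1$ would force $p^{k(r+s)}=1$; so the nonvanishing you list as an additional assumption is in fact automatic. Second, your computation that $\Omega$ acts on $v_k$ by $(q^r/p^s)^k$ is right, but this equals $1$ under $q^r=p^s$ rather than $p^r=q^s$; the same mismatch is already present in the paper (compare the convention $q^r=p^s$ in \eqref{aqn-rel} with the hypothesis stated in Section \ref{sec.dep}), so this is an inherited indexing slip in the roles of $r$ and $s$ rather than a gap introduced by your argument.
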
 

\begin{proof}
For convenience, let $[m]=[m]_{p,q}$.
Let $V^n=\kk[\xi_1,\hdots,\xi_n]$.
For $\lambda = \xi_1^{m_1} \xi_2^{m_2} \cdots \xi_n^{m_n} \in V^n$,
the action of $\apn$ of $V^n$ is given by 
\begin{align*}  
  x_i.\lambda &= \xi_1^{m_1} \cdots 
  	\xi_i^{m_i+1} \cdots \xi_n^{m_n}, ~~
  y_i.\lambda = [m_i] \xi_1^{m_1} \cdots 
  	\xi_i^{m_i-1} \cdots \xi_n^{m_n}, \\
  z_i^{\pm 1}.\lambda &= p^{\mp m_i} \lambda, ~~
  w_i^{\pm 1}.\lambda = q^{\mp m_i}.\lambda.
\end{align*} 
Checking that this action satisfies the above relations is 
an easy exercise using the relations \eqref{pqrel}. 

Let $V'$ be a subrepresentation of $V^n$ 
and let $u \in V'$ be nonzero. 
Let $\lambda$ be the maximal monomial in $u$ according to 
the lexicographic ordering on $\ZZ^n$.
Let $\lambda$ be as above. Then
\begin{align*}
  (y_1^{m_1} \cdots y_n^{m_n}).\lambda 
  		&= [m_1]! [m_2]! \cdots [m_n]! \cdot 1.
\end{align*}
Since $p$ is not a root of unity, 
then no $[m_i]$ vanishes and so $1 \in V'$. 
\end{proof}

\begin{prop}
\label{apn.props}
The algebra $\apn$ is a simple noetherian domain with trivial center, 
global dimension $n$ and GK dimension $2n$.
\end{prop}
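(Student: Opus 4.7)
The plan is to induct on $n$. The base case $n=1$ collects Theorem \ref{simple} (simplicity, noetherianness, domain), Proposition \ref{aprs.props} ($\gldim = 1$, $\GK = 2$), and Corollary \ref{cor.center}(1) passed through the quotient $\Omega = 1$ (trivial center). For the inductive step, write $\apn = A \tensor_\kk B$ with $A = A_p^{n-1}(r,s)$ and $B = \aprs$; by the inductive hypothesis, both factors satisfy all six listed properties.

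The triviality of the center, simplicity, noetherianness, and domain property are essentially formal. Since $\kk$ is algebraically closed and $\cnt(A) = \cnt(B) = \kk$, the classical argument (cf.\ \cite[\S 9.6]{mcrob}) gives $\cnt(A \tensor B) = \cnt(A) \tensor \cnt(B) = \kk$ and the simplicity of $A \tensor B$. For the ring-theoretic properties, I would appeal to the GWA description: iterating Proposition \ref{aprs.gwa}, the algebra $\apn$ is naturally a rank-$n$ GWA over the commutative noetherian domain $D_n = \kk[z_i^{\pm 1}, w_i^{\pm 1}]_{i=1}^n$, and both properties follow by repeatedly invoking \cite[Proposition 1.3]{bavrep}, as in the proof of Theorem \ref{simple}.

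The dimension equalities require more care. For GK dimension, the polynomial subalgebra $\kk[z_1, \dots, z_n, w_1, \dots, w_n] \subset \apn$ forces $\GK \apn \geq 2n$, while the standard sub-additivity $\GK(A \tensor B) \leq \GK A + \GK B$ for affine $\kk$-algebras supplies the reverse bound. The global-dimension statement is the main obstacle. The upper bound $\gldim \apn \leq n$ follows from the inductive hypothesis combined with $\gldim(A \tensor B) \leq \gldim A + \gldim B$. For the lower bound, the faithful irreducible representation $V^n = \kk[\xi_1, \dots, \xi_n]$ of Proposition \ref{vn.irred} factors as $V^{n-1} \tensor_\kk V$, where $V^{n-1}$ and $V$ are the analogous modules over $A$ and $B$. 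I would then invoke the Künneth-type formula
\[ \operatorname{pd}_{\apn}(V^{n-1} \tensor V) = \operatorname{pd}_A(V^{n-1}) + \operatorname{pd}_B(V), \]
combined with the base case $\operatorname{pd}_B(V) = 1$ (the simple module $V$ cannot be projective over $\aprs$, since a nonzero projective summand of the prime noetherian algebra $\aprs$ must have $\GK = 2$, while $\GK V = 1$), to conclude by induction that $\operatorname{pd}_{\apn}(V^n) = n$. Verifying the flatness hypotheses under which the Künneth formula is valid, and ensuring the argument survives over a non-commutative base, will be the primary technical point.
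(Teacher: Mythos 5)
Your overall strategy---induct on the tensor decomposition $\apn = A_p^{n-1}(r,s)\tensor\aprs$ and dispatch each property with a standard tensor-product lemma---is essentially the paper's. The paper cites Rowen for simplicity, obtains the trivial center from primitivity (via the faithful simple module $V^n$ of Proposition \ref{vn.irred}) rather than from $\cnt(A\tensor B)=\cnt(A)\tensor\cnt(B)$, uses \cite[Lemma 8.2.4]{mcpet} for additivity of GK dimension, and gets both the noetherian property and the global dimension from the GWA structure via Bavula's results. Your K\"unneth-style lower bound for the global dimension is a reasonable hands-on substitute for the paper's citation of \cite[Lemma 1.1]{bav.tensor}, and your observation that the simple module $V$ is not projective is correct, though the cleanest justification uses simplicity rather than primeness: over a simple noetherian ring a nonzero finitely generated projective has nonzero trace ideal, hence is a generator, which would force $\GK V \geq \GK\aprs = 2$.

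There is, however, a genuine gap in your lower bound for the GK dimension. The elements $z_i$ and $w_i$ are not algebraically independent: combining $w_i = (y_ix_i - p\inv x_iy_i)^{r-1}z_i^s$ with $y_ix_i - p\inv x_iy_i = w_i\inv$ gives $w_i^{r} = z_i^{s}$ (and when $r=1$ one has $w_i=z_i$ outright). Consequently the commutative subalgebra generated by the $z_i^{\pm 1}, w_i^{\pm 1}$ is a proper quotient of a Laurent polynomial ring and has GK dimension $n$, not $2n$, so the claimed inequality $\GK\apn\geq 2n$ does not follow from it. (The same relation means the base ring $D_n$ of your rank-$n$ GWA description has Krull dimension $n$, although that does not harm the noetherian argument.) The gap is easily repaired: either use the additivity of GK dimension for tensor products as the paper does, or replace your subalgebra by the quantum affine space generated by $x_1,\dots,x_n,z_1,\dots,z_n$, which is a genuine $2n$-dimensional witness since each $\kk\langle x_i,z_i\rangle$ is a quantum plane sitting freely inside the corresponding tensor factor.
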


\begin{proof}
The statement on simplicity follows from the tensor 
product construction \cite[Theorem 1.7.27]{rowen}.
By Proposition \ref{vn.irred}, 
the representation $V^n$ is simple and faithful.
Hence, $\apn$ is primitive. 
Thus, by \cite[Proposition 3.2]{kirkkuz}, $\cnt(\apn)=\kk$.

By Proposition \ref{aprs.props}, $\GK \aprs =2$.
Hence, by \cite[Lemma 8.2.4]{mcpet},
	\[ \GK \apn = \GK (A_p^{n-1}(r,s) \tensor \aprs) 
		= \GK A_p^{n-1}(r,s)  + \GK \aprs.\]
The result on GK dimension now follows by induction.

By Proposition \ref{aprs.gwa}, $\aprs$ is a GWA.
Hence, $\apn$ is a tensor product of GWAs and is itself a GWA.
That $\apn$ is (left and right) noetherian now follows from 
\cite[Proposition 7]{bav.gwa}. 
The result on global dimension now follows from \cite[Lemma 1.1]{bav.tensor}.
\end{proof}

\section{Two-parameter quantum Virasoro algebra}
\label{sec.vira}

The (centerless) Virasoro Lie algebra is generated by
$\{x_i\}_{i \in \ZZ}$ with relations $[x_n,x_m]=(m-n)x_{n+m}$.
We will be most interested in the Virasoro enveloping algebra,
which we denote by $\mathcal{V}$.
Following \cite{kirksmall}, 
the q-analog of the Virasoro algebra $\vq$ is the $\kk$-algebra
generated by $\{y_i\}_{i \in \ZZ}$ with relations
	\[ q^{m-n}y_ny_m - q^{n-m}y_my_n = [m-n]_q y_{m+n}.\]
Kirkman and Small showed that $\vq$
appears as a subalgebra of the quotient division ring of $\aq$. 
Chakrabarti and Jagannathan have constructed a two-parameter analog
of the Virasoro algebra \cite{chakjag}, which we denote as $\vpq$.
This is the $\kk$-algebra on generators 
$\{L_n\}_{n \in \ZZ}$ with relations
\begin{align}
\label{vira1}  p^{n-m}L_nL_m - q^{m-n}L_mL_n &= [m-n]_{p,q}L_{m+n}.
\end{align}
Since we will focus only on the two-parameter case in this section,
there should be no confusion in setting $[k]=[k]_{p,q}$.

Fix $p,q$ and let $Q$ be the quotient ring of $\apq$
in the independent parameter case,
or that of $\aprs$ in the dependent parameter case.
We claim that $\vpq$ appears as a subalgebra of $Q$.
Once shown, we will be able to analyze the representation 
theory of $\vpq$ in a manner analogous to $\vq$.

\begin{prop}
The elements $L_n = z\inv x^{n+1} y \in Q$ generate a copy
of $\vpq$ in $Q$.
\end{prop}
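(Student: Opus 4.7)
The plan is to prove the identity $p^{n-m}L_nL_m - q^{m-n}L_mL_n = [m-n]_{p,q} L_{n+m}$ by a direct computation, using the normalizing relations for $z$ together with Lemma~\ref{idlm1}. I would start by writing $L_n L_m = z\inv x^{n+1} y z\inv x^{m+1} y$ and pushing both copies of $z\inv$ to the left using $y z\inv = p z\inv y$ and $x z\inv = p\inv z\inv x$; this collapses the product into $p^{-n} z^{-2} x^{n+1}(yx^{m+1})y$. Expanding $yx^{m+1}$ via \eqref{ident1} then produces
\[ L_nL_m = p^{-n}q^{m+1}\, z^{-2}x^{n+m+2}y^2 + p^{-n}[m+1]_{p,q}\, z^{-2}x^{n+m+1}zy, \]
and one more application of $xz = pzx$ rewrites the second summand as $p^{m+1}[m+1]_{p,q}\, L_{n+m}$.

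The same manipulation with $n$ and $m$ interchanged yields
\[ L_mL_n = p^{-m}q^{n+1}\, z^{-2}x^{n+m+2}y^2 + p^{n+1}[n+1]_{p,q}\, L_{n+m}.\]
Forming the combination $p^{n-m}L_nL_m - q^{m-n}L_mL_n$, the coefficients of the $z^{-2}x^{n+m+2}y^2$ term become $p^{-m}q^{m+1}$ on both sides, so the $y^2$ contribution cancels. What survives is $\bigl(p^{n+1}[m+1]_{p,q} - p^{n+1}q^{m-n}[n+1]_{p,q}\bigr)L_{n+m}$.

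It then remains to verify the scalar identity $p^{n+1}\bigl([m+1]_{p,q} - q^{m-n}[n+1]_{p,q}\bigr) = [m-n]_{p,q}$, which is a direct expansion via the closed form $[k]_{p,q} = (q^k - p^{-k})/(q - p\inv)$: the $q^{m+1}$ terms cancel, leaving $p^{n+1} \cdot p^{-(n+1)}[m-n]_{p,q}$. The main obstacle is purely clerical, namely tracking the $p^iq^j$ scalars produced by the commutations; no step requires anything beyond Lemma~\ref{idlm1} and the defining formula for the $(p,q)$-numbers.
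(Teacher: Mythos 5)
Your computation is correct and takes essentially the same route as the paper: a direct verification of the relation \eqref{vira1} using the commutation rules for $z$, the identity \eqref{ident1}, and the closed form of the $(p,q)$-numbers. The only difference is organizational — you reduce both $L_nL_m$ and $L_mL_n$ to a common normal form $z^{-2}x^{n+m+2}y^2$ plus a multiple of $L_{n+m}$ and cancel, whereas the paper rewrites $L_nL_m$ directly as $q^{m-n}p^{m-n}L_mL_n$ plus a multiple of $L_{n+m}$ — and all the scalars you track check out.
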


\begin{proof}
First we consider the commutation rule for $L_n$ and $L_m$.
By \eqref{ident1}
\begin{align*}
	L_nL_m &= (z\inv x^{n+1} y)(z\inv x^{m+1} y) \\
		&= q^{-(n+1)}z\inv (yx^{n+1} - [n+1]x^n z)z\inv x^{m+1} y \\
		&= q^{-(n+1)}z\inv \left(p^{m+1}yx^{n+1}x^{m+1}z\inv 
				- [n+1]x^{n+m+1}\right) y \\
		&= q^{-(n+1)}z\inv \left(p^{m+1}(q^{m+1}x^{m+1}y 
				+ [m+1]x^mz)(p^{-(n+1)}z\inv x^{n+1}) - [n+1]x^{n+m+1}\right) y \\
		&= q^{m-n}p^{m-n}L_mL_n + q^{-(n+1)} (p^{m-n}[m+1]-[n+1])L_{n+m}.
\end{align*}
Hence, 
\begin{align*}
	p^{n-m}L_nL_m - q^{m-n}L_mL_n 
		&= q^{-(n+1)} ([m+1]-p^{n-m}[n+1])L_{n+m} \\
		&= (q-p\inv)\inv q^{-(n+1)} \left(q^{m+1} - p^{n-m}q^{n+1})\right)L_{n+m} \\
		&= (q-p\inv)\inv \left(q^{m-n} - p^{n-m}\right)L_{n+m} \\
		&= [m-n]L_{n+m}.
\end{align*}
\end{proof}

Let $B'$ be the subring of $Q$ generated by the elements 
$x$ and $z\inv y$.
Then 
	\[ (z\inv y)x = z\inv (qxy + z) = pqx(z\inv y) + 1.\]
Thus, $B' \iso A_1^{pq}(\kk)$ \eqref{qwa.def}.
The elements $\{x^i\}_{i \in \NN}$ form an Ore set in $B'$ and so 
$B=B'[x\inv]$ is noetherian.
Let $V$ be the subring of $B$ generated by $\{x^n(z\inv y)\}$. 

Recall that the \textit{idealizer} of a left ideal $J$ in a ring $R$ is 
defined as $\bI_R(J)=\{r \in R \mid Jr \subset J\}$, 
so that $\bI(J)$ is a two-sided ideal.

\begin{lem}
Let $I=B(z\inv y)$. 
In $B$ we have $V=\kk \oplus I = \bI_B(I)$.
\end{lem}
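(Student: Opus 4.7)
The plan is to set $X = x$ and $Y = z\inv y$, so that $B' \iso A_1^{pq}(\kk)$ has defining relation $YX = pq\, XY + 1$ and the Ore localization $B = B'[X\inv]$ admits the PBW basis $\{X^i Y^j : i \in \ZZ,\, j \geq 0\}$. Then $I = BY$ has basis $\{X^i Y^j : i \in \ZZ,\, j \geq 1\}$, so $\kk \oplus I$ is the $\kk$-span of $\{1\} \cup \{X^i Y^j : j \geq 1\}$. I would establish the two equalities in the lemma separately.

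For $V = \kk \oplus I$, the inclusion $V \subseteq \kk \oplus I$ is immediate: any nontrivial product of the generators $X^{n_k} Y$ ends in $Y$ on the right, hence lies in $BY = I$, and the sum is direct because $1 \notin I$. For the reverse inclusion, note that $Y = X^0 Y$ is itself a generator of $V$, so $Y^j \in V$ for all $j \geq 0$; then $(X^n Y) \cdot Y^{j-1} = X^n Y^j \in V$ for every $n \in \ZZ$ and $j \geq 1$, exhausting a basis of $I$.

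For $\bI_B(I) = \kk \oplus I$, one direction is easy: $\kk \subseteq \bI_B(I)$ trivially, and for $cY \in I$ one has $I \cdot cY = (BYc)Y \subseteq BY = I$, so $I \subseteq \bI_B(I)$. The reverse inclusion is the technical heart. I would first derive, by induction for $i \geq 0$ and by inversion for $i < 0$, the commutation rule
\[ YX^i = (pq)^i X^i Y + [i]\, X^{i-1} \quad \text{for all } i \in \ZZ,\]
where $[i] = ((pq)^i - 1)/(pq - 1)$. Since $I = BY$, the condition $b \in \bI_B(I)$ reduces to $Yb \in BY$. Writing $b = \sum_{i,j} \alpha_{ij} X^i Y^j$ and expanding $Yb$ via the commutation rule, the only terms lying outside $BY$ are the $Y$-free remnants $\sum_i \alpha_{i,0}\,[i]\, X^{i-1}$. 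Requiring these to vanish forces $\alpha_{i,0} = 0$ for all $i \neq 0$, whence $b \in \kk \cdot 1 + I$.

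The principal obstacle is ensuring $[i] \neq 0$ for $i \neq 0$; this amounts to the genericity hypothesis that $pq$ is not a root of unity, which is implicit throughout this Virasoro section (being required already for the Virasoro identities themselves). Extending the commutation rule to negative $i$ is routine but must be tracked carefully, since $X\inv$ was introduced precisely to accommodate Virasoro-type generators with negative exponents.
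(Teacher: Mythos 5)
Your proof is correct and takes essentially the same route as the paper's: both reduce the idealizer condition to $Y\mu\in BY$ for $Y=z\inv y$, expand using the commutation rule $YX^{i}=(pq)^{i}X^{i}Y+[i]X^{i-1}$ (the paper's version is the identity \eqref{ident1} applied inside $Q$), and conclude that the $j=0$, $i\neq 0$ coefficients must vanish because $[i]\neq 0$. The non-vanishing of $[i]$ for $i\neq 0$, which you flag as the principal obstacle, is indeed secured by the standing hypotheses of this section (independent parameters, or dependent parameters with $H$ not PI), either of which forces $(pq)^{i}\neq 1$ for $i\neq 0$.
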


\begin{proof}
The first equality is clear by the commutation rule 
$x^nz\inv = p^{-n}z\inv x^n$ in $Q$. 
On the other hand, the generators of $V$ lie in $I$ so 
$V \subset \bI_B(I)$. 
Let $\mu=\sum \alpha_{ij} x^i (z\inv y)^j \in \bI_B(I)$,
then $I\mu \subset I$ and by \eqref{ident1},
\begin{align*}
I\mu  
	&= B\left[(z\inv y)\sum \alpha_{ij} x^i (z\inv y)^j\right] \\
	&= B\left[z\inv \sum \alpha_{ij}(q^i x^i y 
		+ [i]x^{i-1}z)(z\inv y)^j\right] \\
	&= B\left[\sum \alpha_{ij} q^i p^i x^i (z\inv y)^{j+1} 
		+ p^{i-1}[i]x^{i-1}(z\inv y)^j\right].
\end{align*}
The first term in the sum is an element of $I$ but the second term 
will lie in $I$ only if $j\geq 1$, so $\mu \in V$.
Thus, $\bI_B(I) \subset V$.
\end{proof}

\begin{lem}
The left ideal $I$ is maximal and generative.
\end{lem}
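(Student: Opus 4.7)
The plan is to extract the module structure of $B/I$ from the PBW basis of $B$, use this to establish simplicity (hence maximality), and then obtain generativity directly from the defining relation of the quantum Weyl algebra.

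First I would fix a convenient PBW basis. Since $B' \iso A_1^{pq}(\kk)$ has basis $\{x^i (z\inv y)^j : i,j \geq 0\}$, the localization $B = B'[x\inv]$ has basis $\{x^i (z\inv y)^j : i \in \ZZ,\ j \geq 0\}$. Hence $I = B(z\inv y)$ consists of those basis elements with $j \geq 1$, and $B/I$ acquires the $\kk$-basis $\{\overline{x^n} : n \in \ZZ\}$. An induction using the relation $(z\inv y)x = pq\, x(z\inv y) + 1$ gives
\[
(z\inv y) x^n = (pq)^n x^n (z\inv y) + d_n x^{n-1}, \qquad d_n = \frac{(pq)^n - 1}{pq-1},
\]
for every $n \in \ZZ$. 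Because $H$ is not PI, neither $p$ nor $q$ is a root of unity; combined with $p^r = q^s$ this forces $pq$ to be a nonroot of unity as well, so $d_n = 0$ iff $n = 0$. Modulo $I$, the action of $B$ therefore reads $x \cdot \overline{x^n} = \overline{x^{n+1}}$ and $(z\inv y)\cdot \overline{x^n} = d_n \overline{x^{n-1}}$.

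For maximality I would show $B/I$ is simple. Given any nonzero $u = \sum_{i \in S} \alpha_i \overline{x^i}$ with finite support $S \subset \ZZ$ and $|S| \geq 2$, act first by $x^{-\min S}$ to translate the minimum of the support to $0$, then by $z\inv y$: the $\overline{x^0}$ term is annihilated (since $d_0 = 0$) while every other term survives with a nonzero scalar (their indices being strictly positive) and shifts down by one. The support strictly shrinks, so iterating reduces $u$ to a nonzero scalar multiple of a single $\overline{x^n}$; a further translation and scaling give $\overline{1}$, and since $\overline{x^n} = x^n \cdot \overline{1}$ for every $n$, we conclude $Bu = B/I$. For generativity I must verify $1 \in BIB$, which is immediate from the rearrangement
\[
1 = (z\inv y)\cdot x - pq \cdot x \cdot (z\inv y),
\]
since both summands on the right lie in $B \cdot I \cdot B$.

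The main obstacle is the simplicity argument, where the whole of the work goes into exploiting the single vanishing $d_0 = 0$ to strip one index at a time from the support of an arbitrary element; once the PBW basis is in hand, generativity is essentially a one-line consequence of the quantum Weyl relation.
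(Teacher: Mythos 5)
Your proof is correct and follows essentially the same route as the paper: compute the $B$-action on $B/I$ with basis $\{\overline{x^n}\}_{n\in\ZZ}$, use the fact that $z^{-1}y$ acts as a lowering operator vanishing only on $\overline{x^0}$ (since $pq$ is not a root of unity) to prove simplicity, and extract generativity from $1=(z^{-1}y)x-pq\,x(z^{-1}y)\in IB$. The only cosmetic differences are that you shrink the support from the bottom rather than clearing negative indices and reducing the top degree, and your uniform coefficient $d_n=\frac{(pq)^n-1}{pq-1}$ replaces the paper's case-split expressions in $[k]_{p,q}$.
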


\begin{proof}
We claim $IB = B$, so that $I$ is generative. 
The inclusion $IB \subset B$ is clear. 
For the reverse inclusion, 
note that $(z\inv y)x, pq x(z\inv y) \in IB$. 
The defining relation $yx-qxy=z$ is equivalent to 
$yx\inv - q\inv x\inv y = -pq\inv x^{-2} z$ in $B$, 
and so $1 \in IB$.

For maximality, it suffices to show that the left module 
$M=B/I$ is simple. 
Let $v_k = x^k + I$. 
Since $\{x^i,(z\inv y)^j \mid i \in \ZZ,j \in \ZZ_+\}$ 
is a basis for $B$, 
then the set $\{v_k \mid k \in \ZZ\}$ is a basis for $M$. 
This gives the following relations in $M$,
\begin{align*}
  x^i v_k &= v_{k+i} \text{ for } i,k \in \mathbb{Z}, \\
  (z\inv y) v_k &= 
  	\begin{cases}
  		p^{k-1} [k] v_{k-1} & k \geq 1, \\ 
  		0 & k=0, \\ 
  		- p^{k+1} q\inv[-k] v_{k-1} & k \leq 1.  
  	\end{cases}
\end{align*}
The relation for the $x^i$ is clear. 
Suppose $k \geq 0$. Then
\begin{align*}
  (z\inv y) v_k 
  	&= z\inv (yx^k) + I 
  	= z\inv (q^k x^k y + [k]x^{k-1} z) + I
    = q^k (z\inv x^k) y + [k] (z\inv x^{k-1})z + I \\
    &= (pq)^k x^k (z\inv y) + p^{k-1}[k] x^{k-1} + I
    = p^{k-1}[k] v_{k-1}.
\end{align*}
If $k=0$, then the fact that $(z\inv y) \in I$ implies the relation. 
Suppose $k \leq 0$, then
\begin{align*}
  	(z\inv y) v_k 
  	&= z\inv (yx^k) + I 
  	= z\inv (q^k x^k y - (pq\inv)[-k]x^{k-1} z) + I \\
    &= q^k (z\inv x^k) y - (pq\inv) [-k] (z\inv x^{k-1})z + I \\
    &= (p\inv q)^k x^k (z\inv y) - p^{k+1} q\inv[-k] x^{k-1} + I
    = - p^{k+1} q\inv[-k] v_{k-1}.
\end{align*}
Suppose $N \subset M$ is a nonzero submodule and let 
$f = \sum \alpha_k v_k \in N$. 
Multiplication by a sufficiently large power of $x$ ensures that 
$\alpha_k = 0$ for all $k < 0$. 
Hence, there is no loss in assuming this holds for $f$. 
Since we may do this for any element in $N$, 
there is no loss in assuming that $f$ has minimal degree in $N$, 
that is, $\max\{k \mid \alpha_k \neq 0\}$ is minimal amongst elements of $N$. But then $(z\inv y)f$ has lower degree, 
a contradiction, so $v_0 \in N$ and so $N=M$.
\end{proof}

A left ideal $J$ in a ring $R$ is said to be \textit{isomaximal} 
if $R/J$ is the finite direct sum of isomorphic simple modules. 
A maximal left ideal is necessarily isomaximal.

\begin{thm}\cite[Theorem 5.5.5]{mcrob} 
Let $J$ be a generative isomaximal left ideal in 
a ring $R$ and $X$ a simple $R$-module.
\begin{enumerate}
  \item If $\Hom(R/J,X)=0$, then $X$ is a simple $\mathbb{I}_R(J)$-module. 
  \item If $M \iso R/X$ with $J \subset X$, then $X$ has a unique composition of length two over $\mathbb{I}_R(J)$, given by $R \supset \mathbb{I}_R(J) + M \supset M$.
\end{enumerate}
\end{thm}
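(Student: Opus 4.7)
The statement is a verbatim quotation of \cite[Theorem 5.5.5]{mcrob}, so the action in the paper itself will be to invoke that reference directly; the subsequent material applies the theorem to $R = B$, $J = I$, and the simple module $B/I$ constructed in the preceding lemmas. I sketch the argument below, but do not plan to reprove it in the text.

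Unpack the hypotheses: generativity means $JR = R$, and isomaximality means $R/J \iso Y^{\oplus k}$ for a single simple $R$-module $Y$. The hypothesis $\Hom_R(R/J,X) = 0$ in (1) is equivalent to saying no nonzero vector of $X$ is annihilated by $J$, equivalently $Y \not\iso X$. For (1), let $N \subseteq X$ be a nonzero $\bI_R(J)$-submodule and pick a nonzero $n \in N$. Since $J \subseteq \bI_R(J)$, we have $Jn \subseteq N$, and by the no-annihilator condition $Jn \neq 0$. The natural left $R$-module map $R \otimes_{\bI_R(J)} N \to X$ sending $r \otimes v \mapsto rv$ has nonzero image; that image is an $R$-submodule of the simple module $X$, hence equals $X$. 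Using $JR = R$ together with the $\bI_R(J)$-stability of $N$, one sees the image is already contained in $N$, forcing $N = X$.

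For (2), the chain $R \supset \bI_R(J) + M \supset M$ consists of left $\bI_R(J)$-submodules of $R$ (that $M$ is $\bI_R(J)$-stable uses $\bI_R(J)\cdot M \subseteq \bI_R(J)(J+M) \subseteq J + M = M$, since $J \subseteq M$). The bottom factor is isomorphic to $\bI_R(J)/(\bI_R(J)\cap M)$ and is simple of composition type $X$; the top factor $R/(\bI_R(J)+M)$ is a constituent of $R/J$ whose $R$-annihilator does not kill $X$, and an application of part (1) to its simple quotient promotes it to a simple $\bI_R(J)$-module. Since the two composition factors are distinguished by the action of $J$ (nontrivial on the bottom, trivial on the top), they are nonisomorphic and the filtration is unique.

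The main technical hurdle, were one to reconstruct the argument rather than cite it, is the promotion step in (1): identifying the image of $R \otimes_{\bI_R(J)} N \to X$ with $N$ for a generative left ideal $J$ is not immediate and relies on the Morita-type correspondence between $R$ and $\bI_R(J)$ developed in \cite{mcrob}. For our purposes, citing the theorem wholesale is the cleanest route.
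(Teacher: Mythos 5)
Your proposal matches the paper exactly: the paper gives no proof of this statement, since it is quoted verbatim from \cite[Theorem 5.5.5]{mcrob} and simply invoked for the application to $R=B$, $J=I$ in Proposition \ref{vpq.reps}. Citing the reference wholesale, as you propose, is precisely what the paper does; your supplementary sketch of the idealizer argument is a reasonable outline but is not needed (and, as you note, the promotion step in (1) would require the full machinery of \cite{mcrob} to make rigorous).
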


Our final result is analogous to \cite[Corollary 6]{deansmall} 
and \cite[Proposition 1.8]{kirksmall},
and follows directly from the above discussion.

\begin{prop}
\label{vpq.reps}
Let $M\neq B/By$ be a simple $B$-module. 
Then $M$ is a simple $\vpq$ module. 
If $M=B/By$, then $B \supset \vpq + By \supset By$ is a composition series for $M$ when regarded as a $\vpq$ module.
\end{prop}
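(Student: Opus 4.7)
The plan is to invoke Theorem 5.5.5 of McConnell--Robson, which is stated just before the proposition. All the needed hypotheses have already been prepared by the two preceding lemmas: $I = B(z\inv y) = By$ is a generative left ideal of $B$, it is maximal (hence automatically isomaximal), and its idealizer is $\bI_B(I) = V = \kk \oplus I$. So only three small tasks remain: identify $V$ with the copy of $\vpq$ appearing in the statement, verify the hypothesis of clause (1) in the case $M \not\iso B/By$, and read off clause (2) in the case $M = B/By$.

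First I would confirm that, as a subalgebra of $B$, the idealizer $V$ coincides with the copy of $\vpq$ built earlier from the generators $L_n = z\inv x^{n+1} y$. Using the relation $x^n z\inv = p^{-n} z\inv x^n$ inside $Q$, one has $x^n(z\inv y) = p^{-n} L_{n-1}$, so the generators of $V$ as a $\kk$-algebra are, up to nonzero scalars, precisely the $L_{n-1}$. Hence the $\kk$-subalgebra of $B$ generated by the $L_n$'s equals $V$, and under this identification we may simply write $V = \vpq$.

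Next, suppose $M$ is a simple $B$-module with $M \not\iso B/By$. Then any nonzero $B$-linear map $B/By \to M$, having simple source and simple target, would necessarily be an isomorphism, contradicting the hypothesis. Hence $\Hom_B(B/By, M) = 0$, and clause (1) of the cited theorem yields that $M$ is simple as a $\bI_B(By) = V = \vpq$-module. If instead $M \iso B/By$, then the maximal left ideal $X = By$ of $B$ trivially satisfies $I \subset X$, so clause (2) of the cited theorem furnishes a length-two composition series for $M$ over $V$, which, after pulling back along $B \twoheadrightarrow M$, is exactly the filtration $B \supset \vpq + By \supset By$ asserted in the statement.

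I do not anticipate any genuine obstacle; the proposition is essentially a packaging of the two preceding lemmas into the framework of the cited theorem. The only point that requires a moment of care is the identification $V = \vpq$, since the generators $x^n(z\inv y)$ of $V$ and the generators $L_n = z\inv x^{n+1} y$ of the Virasoro copy differ by a commutation of $x^n$ past $z\inv$; this is handled cleanly by the single relation $z\inv x = p\,x z\inv$ in $Q$.
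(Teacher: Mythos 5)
Your proposal is correct and follows exactly the route the paper intends: the paper's own ``proof'' is the single remark that the result ``follows directly from the above discussion,'' and you have simply made explicit the three ingredients — the identification $V=\vpq$ via $x^n(z^{-1}y)=p^{-n}L_{n-1}$, the Schur-lemma verification that $\Hom_B(B/I,M)=0$ when $M\not\iso B/I$, and the application of the two clauses of the idealizer theorem to the maximal (hence isomaximal) generative left ideal $I=B(z^{-1}y)$ with $\bI_B(I)=V$. No gaps.
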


\section*{Acknowledgements} 
The author is indebted to 
Georgia Benkart, Bryan Bischof, David Jordan, and Lance Small 
for helpful discussions at various stages of this project.

\bibliography{biblio}{}

\begin{thebibliography}{10}

\bibitem{alev1}
J.~Alev and F.~Dumas.
\newblock Rigidit\'e des plongements des quotients primitifs minimaux de
  {$U_q({\rm sl}(2))$} dans l'alg\`ebre quantique de {W}eyl-{H}ayashi.
\newblock {\em Nagoya Math. J.}, 143:119--146, 1996.

\bibitem{artsch}
Michael Artin and William~F. Schelter.
\newblock Graded algebras of global dimension {$3$}.
\newblock {\em Adv. in Math.}, 66(2):171--216, 1987.

\bibitem{bavjor}
V.~V. Bavula and D.~A. Jordan.
\newblock Isomorphism problems and groups of automorphisms for generalized
  {Weyl} algebras.
\newblock {\em Trans. Amer. Math. Soc.}, 353(2):769--794, 2001.

\bibitem{bav.gwa}
Vladimir Bavula.
\newblock Generalized {Weyl} algebras, kernel and tensor-simple algebras, their
  simple modules.
\newblock In {\em Representations of algebras ({Ottawa}, {ON}, 1992)},
  volume~14 of {\em CMS Conf. Proc.}, pages 83--107. Amer. Math. Soc.,
  Providence, RI, 1993.

\bibitem{bav.tensor}
Vladimir Bavula.
\newblock Tensor homological minimal algebras, global dimension of the tensor
  product of algebras and of generalized {Weyl} algebras.
\newblock {\em Bull. Sci. Math.}, 120(3):293--335, 1996.

\bibitem{bavrep}
Vladimir Bavula and Viktor Bekkert.
\newblock Indecomposable representations of generalized {Weyl} algebras.
\newblock {\em Comm. Algebra}, 28(11):5067--5100, 2000.

\bibitem{benkweyl}
Georgia Benkart.
\newblock Multiparameter {W}eyl algebras.
\newblock {\em arXiv:1306.0485}, 2013.

\bibitem{benkroby}
Georgia Benkart and Tom Roby.
\newblock Down-up algebras.
\newblock {\em J. Algebra}, 209(1):305--344, 1998.

\bibitem{benkspoon}
Georgia Benkart and Sarah Witherspoon.
\newblock Two-parameter quantum groups and {D}rinfel'd doubles.
\newblock {\em Algebr. Represent. Theory}, 7(3):261--286, 2004.

\bibitem{diamond}
George~M. Bergman.
\newblock The diamond lemma for ring theory.
\newblock {\em Adv. in Math.}, 29(2):178--218, 1978.

\bibitem{carlopes}
Paula A. A.~B. Carvalho and Samuel~A. Lopes.
\newblock Automorphisms of generalized down-up algebras.
\newblock {\em Comm. Algebra}, 37(5):1622--1646, 2009.

\bibitem{chakjag}
R~Chakrabarti and R~Jagannathan.
\newblock A $(p,q)$-oscillator realization of two-parameter quantum algebras.
\newblock {\em J. Phys A: Math Gen}, 24:L711--L718, 1991.

\bibitem{deansmall}
C.~Dean and L.~W. Small.
\newblock Ring theoretic aspects of the {V}irasoro algebra.
\newblock {\em Comm. Algebra}, 18(5):1425--1431, 1990.

\bibitem{mtwa}
Vyacheslav Futorny and Jonas~T. Hartwig.
\newblock Multiparameter twisted {Weyl} algebras.
\newblock {\em J. Algebra}, 357:69--93, 2012.

\bibitem{goodwa}
K.~R. Goodearl and R.~B. Warfield, Jr.
\newblock {\em An introduction to noncommutative {N}oetherian rings}, volume~61
  of {\em London Mathematical Society Student Texts}.
\newblock Cambridge University Press, Cambridge, second edition, 2004.

\bibitem{hayashi}
Takahiro Hayashi.
\newblock {$q$}-analogues of {C}lifford and {W}eyl algebras---spinor and
  oscillator representations of quantum enveloping algebras.
\newblock {\em Comm. Math. Phys.}, 127(1):129--144, 1990.

\bibitem{jordan}
David~A. Jordan.
\newblock Height one prime ideals of certain iterated skew polynomial rings.
\newblock {\em Math. Proc. Cambridge Philos. Soc.}, 114(3):407--425, 1993.

\bibitem{jorwells}
David~A. Jordan and Imogen~E. Wells.
\newblock Simple ambiskew polynomial rings.
\newblock {\em J. Algebra}, 382:46--70, 2013.

\bibitem{klr}
A.~C. Kelly, T.~H. Lenagan, and L.~Rigal.
\newblock Ring theoretic properties of quantum {G}rassmannians.
\newblock {\em J. Algebra Appl.}, 3(1):9--30, 2004.

\bibitem{kirkkuz}
Ellen Kirkman and James Kuzmanovich.
\newblock Primitivity of {N}oetherian down-up algebras.
\newblock {\em Comm. Algebra}, 28(6):2983--2997, 2000.

\bibitem{kirkmus}
Ellen~E. Kirkman and Ian~M. Musson.
\newblock Hopf down-up algebras.
\newblock {\em J. Algebra}, 262(1):42--53, 2003.

\bibitem{kirksmall}
Ellen~E. Kirkman and Lance~W. Small.
\newblock {$q$}-analogs of harmonic oscillators and related rings.
\newblock {\em Israel J. Math.}, 81(1-2):111--127, 1993.

\bibitem{lemat}
Andr{\'e} Leroy and Jerzy Matczuk.
\newblock Prime ideals of {O}re extensions.
\newblock {\em Comm. Algebra}, 19(7):1893--1907, 1991.

\bibitem{leva}
Thierry Levasseur.
\newblock Some properties of noncommutative regular graded rings.
\newblock {\em Glasgow Math. J.}, 34(3):277--300, 1992.

\bibitem{mcpet}
J.~C. McConnell and J.~J. Pettit.
\newblock Crossed products and multiplicative analogues of {W}eyl algebras.
\newblock {\em J. London Math. Soc. (2)}, 38(1):47--55, 1988.

\bibitem{mcrob}
J.~C. McConnell and J.~C. Robson.
\newblock {\em Noncommutative {N}oetherian rings}, volume~30 of {\em Graduate
  Studies in Mathematics}.
\newblock American Mathematical Society, Providence, RI, revised edition, 2001.
\newblock With the cooperation of L. W. Small.

\bibitem{rowen}
Louis~H. Rowen.
\newblock {\em Ring theory}.
\newblock Academic Press Inc., Boston, MA, student edition, 1991.

\bibitem{zhang}
J.~J. Zhang.
\newblock Twisted graded algebras and equivalences of graded categories.
\newblock {\em Proc. London Math. Soc. (3)}, 72(2):281--311, 1996.

\end{thebibliography}
\bibliographystyle{plain}

\end{document}